\documentclass[12pt]{amsart}
\usepackage{amsmath,amssymb,amsfonts,amsthm,amscd,indentfirst}
\emergencystretch=15pt  %%% Does away with most overfull \hboxs

\textheight 8.5in
\textwidth 6 in
\topmargin 0.0cm
\oddsidemargin 0.5cm \evensidemargin 0.5cm
\parskip 0.0cm

\usepackage{indentfirst}
\usepackage{hyperref}
\usepackage{graphicx}
\usepackage{color}
\usepackage[dvipsnames]{xcolor}
\usepackage{ stmaryrd }
\numberwithin{equation}{section}

\usepackage{setspace}
\usepackage{amsmath, amssymb,amsthm, amsfonts, color}

%%%%%%%%%%%%%%%%%%%%%%%%%%%%
\usepackage{amsmath}
\usepackage{amsthm}
\usepackage{amsfonts}
\usepackage{amssymb}
\usepackage{graphicx}
\usepackage{comment}
\usepackage{mathrsfs}
\usepackage{dsfont}
\usepackage{hyperref}
\usepackage[shortlabels]{enumitem}
\hypersetup{
    colorlinks=true,
    linkcolor=blue,
    filecolor=magenta,      
    urlcolor=cyan,
}

\usepackage{pgfplots}
\usepgfplotslibrary{polar}
\usepgflibrary{shapes.geometric}
\usetikzlibrary{calc}
\pgfplotsset{my style/.append style={axis x line=middle, axis y line=middle, xlabel={$x$}, ylabel={$y$}, axis equal }} %
\usepackage{tikz}
\usetikzlibrary{calc,arrows,hobby}

\newtheorem{thm}{Theorem}[section]
\newtheorem{lem}{Lemma}[section]
\newtheorem{prop}[thm]{Proposition}
\newtheorem{cor}[thm]{Corollary}

\theoremstyle{remark}
\newtheorem{rmk}{Remark}[section]

\renewcommand\d{\partial}

\newcommand{\lsm}{\llbracket}
\newcommand{\rsm}{\rrbracket}

\newcommand\beq{\begin{equation}}
\newcommand\eeq{\end{equation}}
\newcommand\ben{\begin{enumerate}}
\newcommand\een{\end{enumerate}}
\newcommand\bit{\begin{itemize}}
\newcommand\eit{\end{itemize}}

\DeclareMathOperator{\Hess}{Hess}
\DeclareMathOperator{\dv}{div}

\DeclareMathOperator{\tr}{tr}

\newcommand{\R}{\mathbb{R}}

\newcommand{\F}{\mathcal{F}_+(\Sigma,\gamma)}

\newcommand{\FF}{\mathcal{F}(\Sigma,\gamma)}

\def\mb{\mathfrak{m}_{_B}}
\def\p{\partial}

\def\m{\mathfrak{m}}
\newcommand{\ts}{b}
\newcommand{\Hs}{\mathcal{H}^2_s}
\newcommand{\Ho}{\mathcal{H}^2_1}

\newcommand\norm[1]{\left\lVert#1\right\rVert}

\newcounter{mnotecount}

\setcounter{equation}{0}

\def\be{\begin{equation}}
\def\ee{\end{equation}}

\begin{document}

\date{}

\title{Estimates of the Bartnik mass}

\author{Pengzi Miao}
\address[Pengzi Miao]{Department of Mathematics, University of Miami, Coral Gables, FL 33146, USA}
\email{pengzim@math.miami.edu}

\author{Annachiara Piubello}
\address[Annachiara Piubello]{Department of Mathematics, University of Miami, Coral Gables, FL 33146, USA}
\email{a.piubello@math.miami.edu}

\thanks{P. Miao's research was partially supported by NSF grant DMS-1906423.}

\begin{abstract}
Given a metric $\gamma$ of nonnegative Gauss curvature and a positive function $H$ on a $2$-sphere $\Sigma$, we estimate the Bartnik quasi-local mass of $(\Sigma, \gamma, H)$ in terms of the 
area, the total mean curvature, and a quantity depending only on $\gamma$, measuring 
the roundness of the metric. If $\gamma$ has positive Gauss curvature, 
the roundness of $\gamma$ in the estimate is controlled by the ratio $\kappa$ between the maximum and the minimum of the Gauss curvature. 
As $\kappa \to 1$, the estimate approaches a sharp estimate for round spheres with arbitrary, 
positive mean curvature functions. 

Enroute we observe an estimate of the supremum of the total mean curvature among nonnegative 
scalar curvature fill-ins of a closed manifold with positive scalar curvature.
\end{abstract}

\maketitle

\markboth{Pengzi Miao and Annachiara Piubello}{Estimates of the Bartnik mass}

\section{introduction}

Given a two-sphere $\Sigma$,  a Riemannian metric $\gamma $ and a function $H $ on $\Sigma$, the Bartnik quasi-local mass 
\cite{B2, B3} of the triple $(\Sigma, \gamma, H)$ is given by 
\be
\mb(\Sigma , \gamma ,H) = \inf \left \lbrace \m (M, g) \,\vert \,(M, g) \text{ is an admissible extension of }(\Sigma, \gamma,H)\right \rbrace.
\ee
Here $ \m (\cdot)$ denotes the ADM mass functional \cite{ADM} and $(M, g)$, 
an asymptotically flat $3$-manifold  with boundary $\p M$, is an {admissible extension of $(\Sigma, \gamma,H)$} if
\begin{itemize}
\item[(i)]  $g$  has nonnegative scalar curvature;

\vspace{.1cm}

\item[(ii)]   $\p M$ with the induced metric is isometric to  $(\Sigma, \gamma)$ and, under the isometry, the mean curvature of $\p M$ in
$(M, g $) equals $H$; and

\vspace{.1cm}

\item[(iii)]  $(M, g)$ satisfies certain non-degeneracy condition that prevents $\m (M, g)$ from being arbitrarily small; for instance,
it is often required that $(M, g)$  contains no closed minimal surfaces (enclosing $\p M$), or $\p M$ is area outer-minimizing in $(M, g)$.
\end{itemize}
We refer interested readers to \cite{Anderson-Jauregui, Jauregui-18, McCormick-18, Wiygul1} 
and references therein for other variations in the definition of $\mb (\cdot)$. 

For an arbitrary pair $(\gamma, H)$, it is an interesting problem to construct and parametrize admissible extensions of $(\Sigma, \gamma, H)$ 
(see Problems 1 - 3 in \cite{B3}). 
In the horizon boundary case, i.e. $H = 0 $, under an assumption 
$ \lambda_1 ( -\Delta_\gamma + K_\gamma) > 0 $, where $\Delta_\gamma$ is the Laplacian on $(\Sigma, \gamma)$,
$K_\gamma$ is the Gauss curvature of $\gamma$, and $\lambda_1 $ is the first eigenvalue of $ - \Delta_\gamma + K_\gamma$, 
Mantoulidis and Schoen \cite{MS} constructed admissible extensions of $(\Sigma, \gamma, 0)$, whose ADM mass can be made
arbitrarily close to $ \sqrt{ \frac{ | \Sigma |_{\gamma} }{16 \pi} } $, where $ |\Sigma|_\gamma$ is the area of $(\Sigma, \gamma)$.
As a result, Mantoulidis and Schoen \cite{MS} showed 
\be \label{eq-est-H-0}
 \mb (\Sigma, \gamma, 0 ) \le \sqrt{ \frac{ | \Sigma |_{\gamma} }{16 \pi} }  .
\ee
Combined with the Riemannian Penrose inequality \cite{Bray01, HI01}, \eqref{eq-est-H-0} determined 
\be \label{eq-H-0}
 \mb (\Sigma, \gamma, 0 ) = \sqrt{ \frac{ | \Sigma |_{\gamma} }{16 \pi} }  .
\ee
Such a result was later extended by Chau and Martens \cite{ChauMartens20, ChauMartens22} to metrics $\gamma$ satisfying 
$  \lambda_1 ( -\Delta_\gamma + K_\gamma)  = 0 $. 

In the CMC boundary case, i.e. $H = H_o$ is a positive constant, there have been a sequence of works 
that adapted Mantoulidis-Schoen's methodology 
to derive upper bounds for $\mb(\Sigma, \gamma, H_o)$, see \cite{CCMM17, MWX18, ChauMartens22}.
Also in the CMC case, an earlier work of Lin and Sormani \cite{LS14} gave estimates 
of $\mb (\Sigma, \gamma, H_o)$ by 
using Ricci flow to construct admissible extensions of $(\Sigma, \gamma, H_o)$. 
In all these mentioned works, the metric $\gamma$ is assumed to have either positive or nonnegative Gauss curvature. 

If $\gamma$ has positive Gauss curvature and the mean curvature function $H$ is positive, Shi and Tam \cite{ST} constructed an admissible extension
of $(\Sigma, \gamma, H)$ based on earlier quasi-spherical metric constructions of Bartnik \cite{B93}. 
For such a pair $(\gamma, H)$,  Shi-Tam's result \cite{ST} yields  
\be \label{eq-ST-02}
\mb (\Sigma, \gamma, H) \le \frac{1}{8\pi} \int_{\Sigma} (H_0 - H) \, d \mu_\gamma , 
\ee
where $H_0 $ is the mean curvature of the isometric embedding of $(\Sigma, \gamma)$ in the Euclidean space $ \R^3$ and $ d \mu_\gamma$ denotes the area
form on $(\Sigma, \gamma)$.

In the special case $\gamma = \sigma_o$, a round metric on $\Sigma$, adapting the construction of Shi-Tam \cite{ST}, 
the first author \cite{M07, M07-p} derived a sharp upper bound of $\mb (\Sigma, \sigma_o, H)$ 
with $H$ being an arbitrary, positive function: 
\be \label{eq-M-07}
\mb(\Sigma, \sigma_o, H) \le \sqrt{ \frac{ | \Sigma |_{\gamma} }{ 16 \pi} } \left[ 1 - \frac{1}{16 \pi | \Sigma |_\gamma } \left( \int_\Sigma H \, d \mu_\gamma \right)^2 \right] .
\ee
Equality in \eqref{eq-M-07} holds if and only if the data $(\sigma_o, H)$ arises from CMC round spheres in spatial Schwarzschild manifolds.

In this work, we extend estimate \eqref{eq-M-07} to allow arbitrary metrics $\gamma$ with nonnegative Gauss curvature. Our main result is the following.

\begin{thm} \label{thm-main}
Let $ \gamma $ be a metric of nonnegative Gauss curvature on the two-sphere $ \Sigma$. 
Let $ H  $ be a positive function on $\Sigma$. 
Then the Bartnik mass $\m_B (\Sigma, \gamma, H)$ satisfies
\begin{equation} \label{eq-mB-intro}
\m_B (\Sigma, g, H) \leq  \sqrt{ \frac{ | \Sigma |_\gamma} { 16 \pi} } 
\left[ \left( 1 + \frac{ \zeta (\gamma)  }{ 8 \pi  r_\gamma }  \int_\Sigma H \, d \mu_\gamma \right)^2
- \left( \frac{  1 }{ 8 \pi  r_\gamma }  \int_\Sigma H \, d \mu_\gamma \right)^2   \right].
\end{equation}
Here $ r_\gamma = \left( \frac{ | \Sigma |_\gamma }{ 4 \pi } \right)^\frac12 $, 
$\zeta ( \gamma ) \ge 0 $ is a constant depending only on $\gamma$, and is 
invariant under scaling of $\gamma$.
If $\gamma$ has positive Gauss curvature, then
$\zeta (\gamma) \le C (\kappa)$  for some constant $ C(\kappa)$ depending only on 
$ \kappa = \displaystyle \frac{ \max_\Sigma K_\gamma } { \min_\Sigma K_\gamma} $.
Moreover, there exists a small  $\epsilon > 0 $, such that,  if 
$ \kappa < 1 + \epsilon $, then
\be \label{eq-zeta-kappa-1-intro}
 \zeta (\gamma) \le  C | \kappa - 1 |, 
\ee
where $C$ is some absolute constant.
\end{thm}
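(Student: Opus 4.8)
The plan is to construct explicit admissible extensions of $(\Sigma,\gamma,H)$ by gluing a carefully built collar onto a Shi--Tam \cite{ST} quasi-spherical extension of a round sphere, and to read off the bound from the sharp round-sphere estimate \eqref{eq-M-07}. Normalize so that $|\Sigma|_\gamma=4\pi r_\gamma^2$, and let the radius $\rho\ge r_\gamma$ of the round sphere sitting at the outer end of the collar, and the positive mean curvature $H_1$ prescribed along it, be parameters to be fixed at the end. Suppose we have a collar $(N,\bar g)=([0,1]\times\Sigma,\bar g)$ with $R_{\bar g}\ge 0$ whose boundary component $\{0\}\times\Sigma$ is isometric to $(\Sigma,\gamma)$ with mean curvature $H$ relative to the outward normal $\partial_t$, and whose component $\{1\}\times\Sigma$ is the round sphere of radius $\rho$ with mean curvature $H_1$. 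Gluing the round end to the inner boundary of a Shi--Tam extension of that round datum, matching the induced metrics and the oppositely oriented mean curvatures, one obtains a corner of nonnegative distributional scalar curvature; for parameters in the range in which the result is admissible (e.g.\ with $\partial M$ area outer-minimizing) this is an admissible extension of $(\Sigma,\gamma,H)$. Since $N$ is compact its ADM mass equals that of the Shi--Tam piece, whence by \eqref{eq-M-07}
\be
\m_B(\Sigma,\gamma,H)\ \le\ \frac{\rho}{2}\left[\,1-\left(\frac{1}{8\pi\rho}\int_\Sigma H_1\,d\mu\right)^{2}\,\right],
\ee
where $d\mu$ is the area form of the round $\rho$-sphere.

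For the collar I would use the Mantoulidis--Schoen \cite{MS} form $\bar g=dt^2+u(t)^2 g_t$, where $\{g_t\}_{t\in[0,1]}$ is a path of metrics of fixed area $|\Sigma|_\gamma$ with $g_0=\gamma$ and $g_1$ round, and $u>0$ is a function of $t$ alone with $u(0)=1$. The slice $\{t\}\times\Sigma$ has mean curvature $H_t=\tfrac{2u'}{u}+\partial_t\log\sqrt{\det g_t}$, a genuine (non-constant) function on $\Sigma$; this is what lets us realize the prescribed positive $H$ at $t=0$: one chooses the initial velocity of the path so that $\partial_t\log\sqrt{\det g_t}\big|_{t=0}=H-\overline H$ — legitimate since both sides have $g_0$-mean zero, $\overline H$ denoting the average of $H$ — and sets $\tfrac{2u'(0)}{u(0)}=\overline H$. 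The scalar-curvature identity $R_{\bar g}=2u^{-2}K_{g_t}-2\partial_t H_t-|\mathrm{I\!I}_t|^2-H_t^2$ then turns $R_{\bar g}\ge 0$ into a second-order differential inequality for $u$ which must hold pointwise on $\Sigma$, and which is solvable on $[0,1]$ with $u>0$ and $H_t>0$ throughout provided the path has Gauss curvature staying positive enough relative to its speed. This is exactly where the hypothesis $K_\gamma\ge 0$ enters: it guarantees such a path exists — for instance a reparametrized normalized Ricci flow started at $\gamma$, which becomes positively curved instantly and converges to a round metric, or a direct interpolation as in the constant-mean-curvature constructions of \cite{CCMM17, MWX18}.

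Next, the roundness constant. A short computation shows that the integral of $H_t$ over the slice $\{t\}\times\Sigma$ equals $\tfrac{d}{dt}$ of the slice area, so requiring the total mean curvature not to decrease along the collar amounts, on writing $v:=(\text{slice area})/4\pi$, to $v(0)=r_\gamma^2$ and $v'(1)\ge v'(0)$, while the curvature inequality above bounds $v''$ from above; one then seeks to minimize the terminal area $v(1)=\rho^2$ subject to $v>0$ and $H_t>0$. I would \emph{define} $\zeta(\gamma)\ge0$ to be the smallest constant for which such a collar can be built with $\rho\le r_\gamma\big(1+\tfrac{\zeta(\gamma)}{8\pi r_\gamma}\int_\Sigma H\,d\mu_\gamma\big)$; it is finite by the path construction above and scale-invariant because every quantity in its definition is. Feeding this bound on $\rho$, together with $\int_\Sigma H_1\,d\mu\ge\int_\Sigma H\,d\mu_\gamma$, into the displayed inequality, a short calculation yields \eqref{eq-mB-intro} — in the range where its right-hand side is nonnegative, which is precisely the range in which the construction runs — and $\zeta(\sigma_o)=0$ recovers \eqref{eq-M-07}. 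To bound $\zeta(\gamma)$ when $K_\gamma>0$ I would compare with the Euclidean model: isometrically embed $(\Sigma,\gamma)\hookrightarrow\R^3$ as a convex surface, use the Euclidean annulus out to a concentric round sphere for the outer part of the collar, and invoke Shi--Tam monotonicity to see that a total mean curvature as large as $\int_\Sigma H_0\,d\mu_\gamma$ can be transported outward — this is the point at which the ``enroute'' estimate of the abstract, on the supremum of the total mean curvature over nonnegative scalar curvature fill-ins of $(\Sigma,\gamma)$, is used; combined with classical comparison bounds for convex surfaces (inradius, circumradius and mean width are all pinched by $\kappa$), this gives $\zeta(\gamma)\le C(\kappa)$. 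For $\kappa\to 1$: since $K_\gamma\equiv\mathrm{const}$ forces $\gamma$ round, $\kappa\to1$ forces $\gamma\to\sigma_o$, and I would linearize the collar construction about $\sigma_o$ and show that the minimal area growth it incurs is $O(\|\gamma-\sigma_o\|)$, while $\|\gamma-\sigma_o\|=O(|\kappa-1|)$ by a curvature-pinching (elliptic/Bochner) estimate; this is \eqref{eq-zeta-kappa-1-intro}.

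The main difficulty is twofold. First, the collar has to realize a prescribed \emph{non-constant} mean curvature at its inner end while maintaining both $R_{\bar g}\ge0$ and $H_t>0$ on every slice (the latter indispensable for the no-enclosing-minimal-surface requirement): the constant-mean-curvature neck constructions must be upgraded, and the pointwise-in-$\Sigma$ nature of the curvature inequality couples the single profile function $u$ to the path $\{g_t\}$ in a way that must be controlled uniformly over $\Sigma$. Second, and harder, is extracting the \emph{linear} rate in \eqref{eq-zeta-kappa-1-intro}: a soft compactness/continuity argument only gives $\zeta(\gamma)=o(1)$ as $\kappa\to1$, so one needs a genuine first-order perturbation analysis of the optimal collar around a round metric, sharp enough that no error term of size $\sqrt{|\kappa-1|}$ survives.
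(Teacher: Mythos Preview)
Your approach differs substantially from the paper's, and the gap you flag as the ``main difficulty'' is real and is precisely what the paper's method is designed to avoid.

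The paper does \emph{not} build a nonnegative-scalar-curvature collar with a radial profile $u=u(t)$ and then glue it to a Shi--Tam extension of a round sphere. Instead it performs a single Bartnik--Shi--Tam quasi-spherical construction on $[1,\infty)\times\Sigma$ with a background $\bar g=ds^2+s^2\gamma_s$, where $\{\gamma_s\}$ interpolates from $r_\gamma^{-2}\gamma$ to a round metric and is Euclidean for $s\ge b$. The warping factor $u$ in $g=u^2\,ds^2+\bar\gamma_s$ solves the parabolic quasi-spherical PDE and depends on \emph{both} $s$ and $x\in\Sigma$; this is exactly what lets one impose the arbitrary positive $H$ as initial data at $s=1$ and obtain $R_g\equiv 0$ everywhere, with no pointwise constraint to fight. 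The mass bound then comes from a differential inequality for $\mathcal H_s^2=(\text{normalized total mean curvature})^2$ along the foliation, which after integration and a judicious choice of reparametrization $s(t)$ produces \eqref{eq-mB-intro} with
\[
\zeta(\gamma)=\inf_{\{\gamma(t)\}}\int_0^1\sqrt{\tfrac{\alpha(t)}{4\beta(t)}}\,dt,
\qquad
\alpha(t)=\tfrac12\max_\Sigma|\gamma'(t)|_{\gamma(t)}^2,\quad
\beta(t)=r_\gamma^2\min_\Sigma K_{\gamma(t)}.
\]
This $\zeta(\gamma)$ is manifestly independent of $H$, because the path $\{\gamma(t)\}$ is chosen before $H$ enters. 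By contrast, your definition of $\zeta(\gamma)$ as the optimal area-growth of a collar matching $(\gamma,H)$ at one end is tied to $H$ through the constraint $\partial_t\log\sqrt{\det g_t}|_{t=0}=H-\overline H$ on the path, and you give no argument that the infimum is $H$-independent. Moreover, with $u=u(t)$ the condition $R_{\bar g}\ge 0$ is a \emph{pointwise} inequality on $\Sigma$ involving the non-constant second fundamental form; you have not shown it can be arranged while steering the path to a round metric. (A smaller point: feeding $\rho\le r_\gamma(1+\zeta\mathcal H)$ and $\int H_1\ge\int H$ into \eqref{eq-M-07} gives $\tfrac{r_\gamma}{2(1+\zeta\mathcal H)}[(1+\zeta\mathcal H)^2-\mathcal H^2]$, not \eqref{eq-mB-intro} on the nose.)

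For the bounds on $\zeta(\gamma)$ the paper also proceeds differently: it does not use the convex embedding in $\R^3$ or the fill-in observation. It writes $r_\gamma^{-2}\gamma=e^{2\varphi}\sigma_o$, takes the explicit conformal path $\sigma(t)=c(t)^{-1}e^{2(1-t)\varphi}\sigma_o$ (reparametrized via the Mantoulidis--Schoen diffeomorphism trick so that $\operatorname{tr}_{\gamma(t)}\gamma'(t)=0$), and estimates $\alpha(t)$ by Schauder bounds for $\Delta_{\sigma_o}\psi_t$ and $\beta(t)$ from below via the conformal Gauss-curvature formula. This yields $\zeta(\gamma)\le C\,e^{6\|\varphi\|_0}\|\varphi\|_{0,\alpha}(1+\|\varphi\|_{0,\alpha})(1+\|d\varphi\|_0)$. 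The dependence on $\kappa$ then comes from prescribed-Gauss-curvature estimates (Chang--Gursky--Yang) controlling $\|\varphi\|_0$ and $\|d\varphi\|_0$ by $\kappa$ after a balancing M\"obius normalization; the linear rate \eqref{eq-zeta-kappa-1-intro} follows from a $W^{2,2}$ estimate $\|\varphi\|_{W^{2,2}}\le C\|K_{\tilde\gamma}-1\|_0$ valid for $\kappa$ near $1$. So the sharp linear rate comes for free from existing elliptic estimates on the conformal factor, not from a bespoke linearization of an optimal collar.
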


\begin{rmk}
If $\gamma$ has positive Gauss curvature, 
$(\Sigma, \gamma)$ isometrically embeds in $ \R^3$ as a convex surface
$\Sigma_0$ (\cite{Nirenberg, Pogorelov}). Let $H_0$ be the mean curvature of $\Sigma_0$ in $ \R^3$. 
The Riemannian positive mass theorem (\cite{SchoenYau79, Witten81}) shows
$$\mb(\Sigma_0, \gamma, H_0) = 0 . $$ 
Applying  Theorem \ref{thm-main} to $(\Sigma_0, \gamma, H_0) $, we then have
$$ 0  \le \sqrt{ \frac{ | \Sigma |_\gamma} { 16 \pi} } 
\left[ \left( 1 + \frac{ \zeta (\gamma)  }{ 8 \pi  r_\gamma }  \int_\Sigma H_0 \, d \mu_\gamma \right)^2
- \left( \frac{  1 }{ 8 \pi  r_\gamma }  \int_\Sigma H_0 \, d \mu_\gamma \right)^2   \right], $$
which translates into the following lower bound of $\zeta (\gamma)$: 
\be \label{eq-lower-bd-zeta}
\zeta (\gamma) \ge 1 - \frac{ 8 \pi r_\gamma}{ \int_{\Sigma_0} H_0 \, d \mu_\gamma } . 
\ee
By the classic Minkowski inequality, the right side of \eqref{eq-lower-bd-zeta} is $\ge 0 $ and is zero if and only 
$\Sigma_0$ is a round sphere. 
We think \eqref{eq-lower-bd-zeta} is an interesting lower bound on $\zeta (\gamma)$ because  
$\zeta (\gamma)$ is defined as the infimum over a family of quantities measuring the roundness of $\gamma$,
see the definition \eqref{eq-def-zeta}. 
It follows from \eqref{eq-lower-bd-zeta} and \eqref{eq-def-zeta} that 
$\zeta (\gamma) = 0 $ if and only if $\gamma$ is a round metric.
 \end{rmk}
 
 \begin{rmk}
If we denote the right side of \eqref{eq-mB-intro} by $\tilde \m (\Sigma, \gamma, H)$, it can be shown, along large 
coordinate spheres, $\tilde \m ( \Sigma, \gamma, H)$ approaches the mass of an asymptotically Schwarzschild manifold.
More precisely, suppose $(M, g)$ is a $3$-manifold such that, outside a compact set, $M$ is diffeomorphic to 
$ \R^3$ minus a ball and the metric coefficients $g_{ij}$ satisfies 
$$ g_{ij} = ( 1 + 2m r^{-1} ) \delta_{ij} + O ( |x|^{-2} ) , \ \text{as} \ x \to \infty, $$
where $m$ is a constant and equals the mass of $(M,g)$.
Let $ S_r = \{ | x | = r \}$, and let $\sigma_r$, $H_r$ denote the induced metric, the mean curvature of $S_r$ in $(M,g)$. 
Then, direct calculation gives
\begin{equation*} 
\begin{split}
\int_{S_r} H_r \, d \mu_{\sigma_r} = & \ 8 \pi r + O ( r^{-1} ) , \\
 K_{\sigma_r} = & \ r^{-2} ( 1 + 2m r^{-1} )^{-1}  + O (r^{-4})  , 
\end{split}
\end{equation*}
(for instance, see (5.10) and (5.14) in \cite{ST}). The equation on $K_{\sigma_r}$ implies the curvature ratio $\kappa$, associated to $\sigma_r$, 
 satisfies $ \kappa = 1 + O ( r^{-2} )$. Hence, $ \zeta (\sigma_r) = O ( r^{-2} )$ by \eqref{eq-zeta-kappa-1-intro}.
 These,  together with  the fact $ | S_r |_{\sigma_r} = 4 \pi r^2 ( 1 + 2 m r^{-1} ) ( 1 + O ( r^{-2} ) )$,
  readily implies 
 \be
  \tilde m ( S_r, \sigma_r, H_r) \to m , \ \text{as} \ r \to \infty . 
 \ee 
 \end{rmk}

\vspace{.2cm}

In \cite{MX19}, Xie and the first author found the Mantoulidis-Schoen estimate \eqref{eq-est-H-0}, in the case of metrics $\gamma$ with positive Gauss curvature, can be reproduced by combing the methods in \cite{ST} and \cite{M07}. Moreover, in \cite{MX19} it was shown
\be \label{eq-est-MX}
\mb (\Sigma, \gamma, H) \le \sqrt{ \frac{ | \Sigma |_\gamma}{16 \pi } }
\ee
for any positive function $H$. 

Our derivation of Theorem \ref{thm-main} is motivated by the work in \cite{MX19}.
Briefly speaking, one starts with a special path of metrics $\{\gamma(t) \}_{t \in [0,1]}$, constructed by Mantoulidis-Schoen, which
connects the given metric $\gamma$ to a round metric $\sigma_o$. 
Upon reparameterizing and suitably scaling $\{ \gamma (t) \}$, one obtains a path of metrics 
$\{ \bar \gamma_s \}_{s \in [1, \infty) }$. On the product manifold $\Sigma \times [1, \infty)$ with
a background metric $\bar g = d s^2 + \bar \gamma_s $, one then performs a Bartnik-Shi-Tam type construction 
to build an admissible extension of $(\Sigma, \gamma, H)$.
Carefully tracing how the total mean curvature evolves  along the foliation in the extension, one can relate the mass
of the extension to the total mean curvature at the initial surface as well as the ``expense" paid by connecting 
$\gamma$ to a round metric. The area radius appears in the estimate as a normalization factor. 

Besides \eqref{eq-mB-intro}, estimates in this paper also give an extension of \eqref{eq-est-MX} to metrics $\gamma$ with
nonnegative Gauss curvature, see Corollary \ref{cor-gen-MX}. 

In a suitable sense, a dual problem of estimating the Bartnik mass is a problem of estimating the supremum of the total mean
curvature of nonnegative scalar curvature (NNSC) fill-ins of a given close manifold.  
Interested readers are referred to \cite{Jauregui13, MM16, Gromov18, Gromov19, SWWZ, SWW20, M20}
for results and questions related to NNSC fill-ins. 
As a byproduct in this work, we observe a lower bound of the supremum of the total mean
curvature of NNSC fill-ins of a given manifold with positive scalar curvature, see Theorem \ref{thm-Lambda}.

\section{Extensions and mass estimates} \label{sec-deformation} 

Let $\Sigma $ denote an $(n-1)$-dimensional sphere, $ n \ge 3$.
Let $\gamma$ be a Riemannian metric with nonnegative scalar curvature on $ \Sigma$. 
 Let $r_\gamma$ be the volume radius of $(\Sigma, \gamma)$, 
i.e. $|\Sigma|_\gamma = \omega_{n-1}  r_\gamma^{n-1}$,  where  $\omega_{n-1}$ is the volume 
of a round sphere of radius one in $ \R^{n}$.

Suppose $\{ \gamma (t) \}_{ t \in [0,1]} $ is a smooth path of metrics on $\Sigma$ satisfying the following properties:
\begin{itemize}
\item[i)]  $\gamma(0)=\gamma$, $\gamma(1)$ is a round metric with the same volume as $\gamma$;   
\item[ii)] $ \gamma (t) $ has positive scalar curvature for $ t > 0 $; 
\item[iii)]  $\tr_{\gamma(t) } \gamma' (t) =0$ for $ t \ge 0 $, where $\gamma'(t) = \frac{d}{dt} \gamma (t)$.
\end{itemize}
We will comment on condition iii) in Section \ref{sec-uniformization}. For the moment, fix such a path 
and define
\begin{equation}\label{def-alpha(t)-beta(t)}
\alpha(t) =\frac{\max_{\Sigma}\left|\frac{1}{2}  \gamma'(t)\right|_{\gamma(t)}^2}{n-1} ,\quad \beta(t)=\dfrac{r_\gamma^2\min_\Sigma R_{\gamma(t)}}{(n-1)(n-2)}.
\end{equation}
Here $R_{\gamma(t)}$ denotes the scalar curvature of $\gamma(t)$. Note that $\alpha(t)$ and $\beta(t)$ are scaling invariant 
in the sense, if $\{ \gamma (t) \}$ is replaced by $ \{ c^2 \gamma (t) \}$ for a constant $c>0$,  $\alpha (t)$ and $\beta(t)$ 
will remain unchanged. 

Next, we adopt a construction from \cite{MX19}.
Given a constant $ \ts  > 1$,  consider a smooth function
\begin{equation}\label{rep}
t (\cdot) :[1,\infty)\rightarrow [0,1],  \ \ \text{with} \ 
t(1)= 0 \ \text{and} \  t(s)=1, \,\,\forall \, s\geq \ts.
\end{equation}
For each $ s \in [1, \infty) $, define
$$ \gamma_s = r_{\gamma}^{-2} \gamma(t(s)) .$$
$\{ \gamma_s \}_{s \ge 1}$ satisfies $ \gamma_1  = r_\gamma^{-2} \gamma$ 
and $\gamma_s  = \sigma_o $,  $ s \ge \ts$, 
where $ \sigma_o$ is a round metric on $ \Sigma$ with volume $\omega_{n-1}$.
Let $M=[1,\infty)\times \Sigma$ and $ \Sigma_s =  \{ s \} \times \Sigma $. 
On $M$, consider a background metric
\begin{equation*}
\bar{g}=ds^2+\bar\gamma_s, \ \ \text{where} \ \bar\gamma_s=s^2\gamma_s. 
\end{equation*}
This metric $\bar g$ has the following features: 
\begin{itemize}
\item[a)] the induced metric $\bar \gamma_s$ on $ \Sigma_s  $ has positive scalar curvature for $ s > 1$; 
\item[b)] the second fundamental form $\bar A_s$ and the mean curvature $ \bar H_s$ of $\Sigma_s$ in $(M, \bar g)$
satisfy
\begin{equation} \label{As-Hs}
\bar{A}_s =\frac{\bar{\gamma}_s}{s}+\frac{1}{2}s^2\gamma'_s \ \ \text{and} \ \
\bar{H}_s =\frac{n-1}{s}, \ \forall \, s \ge 1. 
\end{equation}
Here $ \gamma'_s = \frac{d}{d s} \gamma_s $ and condition iii) is used in obtaining $ \bar H_s = \frac{n-1}{s} $.

\vspace{.1cm}

\item[c)]  $\bar g = d s^2 + s^2 \sigma_o $ is a Euclidean metric on $ (\ts , \infty) \times \Sigma$.
\end{itemize}

\vspace{.2cm}

The following lemma follows directly from results in \cite{ST, EMW}. 

\begin{lem}\label{lem-quasi-spherical}
Given any positive function $ H > 0$ on $ \Sigma$, 
there exists a positive function $u$ on $M$ so that
\begin{enumerate}
\item[1)] $g=u^2ds^2+\bar{\gamma}_s$ has zero scalar curvature;
\item[2)] the mean curvature $H_1$  of $\Sigma_1= \partial M $ in $(M, g)$ equals $ r_{\gamma}  H$; 
\item[3)]  $u\rightarrow 1$ as $s\rightarrow \infty$ and $(M, g) $ is asymptotically flat, foliated 
by $\{ \Sigma_s \}_{s \ge 1}$ with positive mean curvature. 
\end{enumerate}
\end{lem}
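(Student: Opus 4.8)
The plan is to recast condition (1) as a Cauchy problem for $u$ on $\Sigma\times[1,\infty)$ with ``time'' variable $s$, and then to invoke the existence and decay theory of \cite{ST, EMW}. First I would derive the governing equation: for $g=u^2ds^2+\bar\gamma_s$ the slice $\Sigma_s$ has unit normal $u^{-1}\partial_s$, second fundamental form $u^{-1}\bar A_s$, and mean curvature $u^{-1}\bar H_s$, so combining the traced Gauss equation with the radial Riccati equation for the foliation $\{\Sigma_s\}$ shows that $R(g)=0$ is equivalent to a quasilinear parabolic equation of the schematic form
\[
\bar H_s\,\frac{\partial u}{\partial s}=u^2\Delta_{\bar\gamma_s}u+\big(\text{lower-order terms}\big),
\]
which is precisely the generalized quasi-spherical equation of Bartnik--Shi--Tam. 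Since $\bar H_s=\frac{n-1}{s}>0$ by \eqref{As-Hs}, this equation is uniformly parabolic in the direction of increasing $s$ on every slab $[1,S]\times\Sigma$, and the zeroth-order part carries a favorable sign because $R(\bar\gamma_s)\ge0$ (feature a)) and $\bar A_s$ is given explicitly by \eqref{As-Hs}.

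Next I would fix the initial data so as to achieve (2). The mean curvature of $\Sigma_1$ in $(M,g)$, computed with respect to the normal pointing toward increasing $s$, equals $\bar H_1/u(1,\cdot)=(n-1)/u(1,\cdot)$; hence I would prescribe $u(1,\cdot)=(n-1)/(r_\gamma H)$, which is smooth and positive because $H>0$, and this makes $H_1=r_\gamma H$. Since each slice $\Sigma_s$ is closed, no lateral boundary condition is required, so we have a genuine initial value problem.

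Short-time existence of a positive solution then follows from standard quasilinear parabolic theory, and the solution continues to all $s\in[1,\infty)$ once one establishes uniform bounds $0<c\le u\le C$ together with the usual interior Schauder estimates. These bounds come from the maximum principle applied to the equation above, using $\bar H_s>0$, $R(\bar\gamma_s)\ge0$, and the form of $\bar A_s$ in \eqref{As-Hs}: on the region $s\ge\ts$, where $\bar g=ds^2+s^2\sigma_o$ is Euclidean, this is verbatim the argument of \cite{ST}, while on the transition region $[1,\ts]$, where the slices satisfy only the positivity condition a), it is the extension carried out in \cite{EMW}. Finally, on $[\ts,\infty)\times\Sigma$ the metric $g$ is a quasi-spherical metric over a Euclidean background, so Shi--Tam's asymptotic analysis gives $u\to1$ at the Schwarzschild rate and shows that $(M,g)$ is asymptotically flat; and since the mean curvature of $\Sigma_s$ in $(M,g)$ is $\bar H_s/u=(n-1)/(su)>0$, the foliation $\{\Sigma_s\}_{s\ge1}$ has positive mean curvature, which gives (3).

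The only genuine obstacle is the global a priori estimate on the transition region $[1,\ts]$: one has to verify that the maximum-principle barriers and derivative bounds of \cite{ST} persist when the background is no longer Euclidean but the slice metrics merely satisfy a), and $\bar A_s$ is no longer umbilic. This is exactly what is imported from \cite{EMW}; on the Euclidean end $[\ts,\infty)$ the asymptotic flatness and the $u\to1$ statement are then a direct citation of \cite{ST}.
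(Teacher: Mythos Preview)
Your proposal is correct and follows essentially the same route as the paper: derive the quasi-spherical parabolic equation for $u$ (the paper writes it explicitly, citing \cite{EMW}), prescribe the initial value $u|_{s=1}=\bar H_1/(r_\gamma H)=(n-1)/(r_\gamma H)$ to realize condition~2), invoke \cite{EMW} for long-time existence via the positivity $K_{\bar\gamma_s}>0$ on $s>1$, and then \cite{ST} on the Euclidean end $(\ts,\infty)\times\Sigma$ for $u\to1$ and asymptotic flatness. One small remark: when you first discuss the zeroth-order term you write $R(\bar\gamma_s)\ge0$, but the barrier argument that prevents blow-up of $u$ in \cite{EMW} uses the \emph{strict} positivity of the slice scalar curvature for $s>1$ (your later reference to condition~a) is the correct one); nonnegativity alone would not suffice for the upper bound.
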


\begin{proof}
Let $ \Delta_s $ denote the Laplacian on $(\Sigma_s, \bar \gamma_s)$.
The equation on $u$ corresponding to conditions 1) and 2) is 
\be \label{eq-u-PDE}
\left\{
\begin{split}
\frac{\p }{\p s} u  = & \ \frac{1}{\bar{H}_s} u^2 \Delta_s u + \frac{u}{2 \bar{H}_s} 
\left( \bar{H}_s^2 + | \bar{A}_s |^2 + 2 \p_s \bar{H}_s \right)  
- u^3 \frac{ K_{\bar{\gamma}_s} }{\bar{H}_s} , \ \ s \ge 1,  \\
u|_{s=1}  = & \frac{1}{ r_\gamma H} \bar{H}_1 
\end{split}
\right.
\ee
(see equation (5) in \cite{EMW} for instance).
Since $\bar{H}_s > 0$, \eqref{eq-u-PDE} has a positive solution on some small interval 
$[1, 1 + \delta)$, $\delta > 0 $. 
Since $ K_{\bar{\gamma}_s} > 0 $ for $ s > 0$, the solution exists on $[1, \infty)$ by 
\cite[Proposition 2]{EMW}. Since $\bar g$ is the Euclidean metric on $(b, \infty) \times \Sigma$, 
the claim that $u$ satisfies 3) follows \cite[Theorem 2.1]{ST}.
\end{proof}

Let $g$ be the metric given in Lemma \ref{lem-quasi-spherical} and let $ \m (g)$ denote its mass.  
Let $ H_s $ be the mean curvature of $ \Sigma_s$ in $(M, g)$.
Define 
$$ \mathcal{H}_s=  \frac{1}{ (n-1) \omega_{n-1} } \int_{\Sigma_s} H_s \, d \mu_s ,$$
where $d \mu_s$ is the volume form on $ (\Sigma_s,  \bar \gamma_s)$. 
As $ \bar g $ is a Euclidean metric on $ (\ts , \infty) \times \Sigma$, 
we apply  \cite[Theorem 2.1]{ST} to deduce 
\begin{equation}
\int_{\Sigma_s}  \bar H_s \, d \mu_s - \int_{\Sigma_s} H_s  \, d \mu_s  = (n-1) \omega_{n-1} \m (g) + o (1), \ \text{as} \ s \to \infty. 
\end{equation}
Since $\Sigma_s$ is a round sphere of radius $s$ in $ \left( (\ts , \infty) \times \Sigma, \bar g \right)$, 
$$ \int_{\Sigma_s} \bar H_s \, d \mu_s = (n-1) \omega_{n-1} s^{n-2}, \ \forall \, s > \ts . $$
Thus,  
\begin{equation} \label{eq-Hs-expansion}
\mathcal{H}_s = s^{n-2}  - \m (g) + o (1), \ \text{as} \ s \to \infty. 
\end{equation} 

\vspace{.2cm}

We analyze how $\mathcal{H}_s$ evolves along $\{ \Sigma_s \}$. 
The next proposition was inspired by 
a computation of Shi-Wang-Wei-Zhu \cite[page 249]{SWWZ}.

\begin{prop}\label{prop-mean curv} 
The total mean curvature $\mathcal{H}_s$ satisfies 
\begin{equation}\label{eq-mean-curv-der}
\frac{d\mathcal{H}^2_s}{ds}\geq  \left(\frac{n-2}{s}-\alpha_s|t'(s)|^2{s}\right)\mathcal{H}^2_s 
+ (n-2) s^{2n-5} \beta_s, \ \ \forall \, s \ge 1 .
\end{equation}
Here $\alpha_s=\alpha(t(s))$ and $ \beta_s=\beta(t(s))$.
\end{prop}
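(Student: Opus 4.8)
The plan is to differentiate $\mathcal{H}_s$ directly along the foliation $\{\Sigma_s\}$ in $(M,g)$, combining the first variation of area with the mean-curvature evolution equation, and then to feed in both the vanishing of $R_g$ and the explicit structure of $\bar\gamma_s$. First I would record that, since $d\mu_s$ is the area form of $\bar\gamma_s$ and does not involve the lapse, $\partial_s\, d\mu_s = \tfrac12\tr_{\bar\gamma_s}(\partial_s\bar\gamma_s)\, d\mu_s = \bar H_s\, d\mu_s$; moreover $\partial_s$ is $g$-orthogonal to $\Sigma_s$ with $g$-length $u$, so $\{\Sigma_s\}$ evolves by its normal with speed $u$, and the standard mean-curvature evolution identity reads $\partial_s H_s = -\Delta_s u - (|A_s|^2 + \Ric_g(\nu,\nu))\,u$, where $A_s$, $H_s$, $\nu$ are the second fundamental form, mean curvature, and unit normal of $\Sigma_s$ in $(M,g)$ and $\Delta_s$ is the Laplacian of the induced metric $\bar\gamma_s$. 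Hence
\begin{equation*}
(n-1)\,\omega_{n-1}\,\frac{d\mathcal{H}_s}{ds} = \int_{\Sigma_s}\big(\partial_s H_s + \bar H_s H_s\big)\,d\mu_s .
\end{equation*}

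Next I would feed in $R_g = 0$: the contracted Gauss equation gives $-2\Ric_g(\nu,\nu) = R_{\bar\gamma_s} - H_s^2 + |A_s|^2$, so $|A_s|^2 + \Ric_g(\nu,\nu) = \tfrac12(|A_s|^2 + H_s^2 - R_{\bar\gamma_s})$. Substituting this, discarding $\int_{\Sigma_s}\Delta_s u = 0$, and using $\bar H_s = u\,H_s$ (equivalently $H_s = u^{-1}\bar H_s$, from \eqref{As-Hs}), the integrand collapses to $-\Delta_s u + \tfrac{u}{2}(H_s^2 - |A_s|^2 + R_{\bar\gamma_s})$, whence
\begin{equation*}
(n-1)\,\omega_{n-1}\,\frac{d\mathcal{H}_s}{ds} = \frac12\int_{\Sigma_s} u\,\big(H_s^2 - |A_s|^2 + R_{\bar\gamma_s}\big)\,d\mu_s .
\end{equation*}

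Then I would compute the bracket using the form of $\bar\gamma_s$. From \eqref{As-Hs}, $A_s = u^{-1}\bar A_s = u^{-1}\big(s^{-1}\bar\gamma_s + \tfrac12 s^2\gamma'_s\big)$, and the cross term in $|A_s|^2$ is a multiple of $\tr_{\gamma_s}\gamma'_s$, which vanishes by condition iii); this leaves $|A_s|^2 = u^{-2}\big(\tfrac{n-1}{s^2} + \tfrac14|\gamma'_s|_{\gamma_s}^2\big)$ and hence $H_s^2 - |A_s|^2 = u^{-2}\big(\tfrac{(n-1)(n-2)}{s^2} - \tfrac14|\gamma'_s|_{\gamma_s}^2\big)$. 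By scaling invariance, $\tfrac14|\gamma'_s|_{\gamma_s}^2 = |t'(s)|^2\,\big|\tfrac12\gamma'(t(s))\big|_{\gamma(t(s))}^2 \le (n-1)\,\alpha_s\,|t'(s)|^2$ and $R_{\bar\gamma_s} = s^{-2} r_\gamma^2 R_{\gamma(t(s))} \ge (n-1)(n-2)\,\beta_s\, s^{-2}$. Plugging these into the last display and splitting the integral: the part proportional to $u^{-1}$, via $u^{-1} = sH_s/(n-1)$, equals exactly $(n-1)\,\omega_{n-1}\big(\tfrac{n-2}{s} - \alpha_s|t'(s)|^2 s\big)\mathcal{H}_s$; for the part proportional to $u$ I would use $u = (n-1)/(sH_s)$ together with the Cauchy--Schwarz inequality $\int_{\Sigma_s} H_s^{-1}\,d\mu_s \ge |\Sigma_s|^2 / \int_{\Sigma_s} H_s\,d\mu_s$ and $|\Sigma_s|_{\bar\gamma_s} = \omega_{n-1}\,s^{n-1}$ (the path is volume preserving by iii), then rescale), to bound it below by $(n-1)(n-2)\,\beta_s\,\omega_{n-1}\,s^{2n-5}/\mathcal{H}_s$. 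Adding the two contributions and multiplying through by $2\mathcal{H}_s$ then yields \eqref{eq-mean-curv-der}.

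The main obstacle is sign bookkeeping: one must get the signs in the mean-curvature evolution equation and in the contracted Gauss equation exactly right so that, after using $R_g = 0$ and $\bar H_s = uH_s$, everything recombines into the single clean bracket $H_s^2 - |A_s|^2 + R_{\bar\gamma_s}$. A second subtlety is the lapse: the $\beta$-term emerges weighted by $u$ rather than $u^{-1}$, and it is precisely the Cauchy--Schwarz step -- that is, the fact that $\mathcal{H}_s$ is a \emph{total} rather than a pointwise mean curvature -- that converts $\int_{\Sigma_s} u\,d\mu_s$ into the clean lower bound proportional to $\mathcal{H}_s^{-1}$. Finally, Lemma \ref{lem-quasi-spherical} provides a positive, smooth $u$ on all of $M$, so the integration by parts and the reciprocals $u^{-1}$, $H_s^{-1}$ above are all legitimate.
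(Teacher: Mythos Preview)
Your argument is correct and follows essentially the same route as the paper. The paper writes the evolution of $H_s$ directly in terms of the background quantities $\bar A_s$, $\bar H_s$ (so the bracket appears as $(\bar H_s^2-|\bar A_s|^2)u^{-1}+R_{\bar\gamma_s}u$), whereas you reach the equivalent expression $u(H_s^2-|A_s|^2+R_{\bar\gamma_s})$ via the Ricci term and $R_g=0$; after the substitution $A_s=u^{-1}\bar A_s$ these coincide, and your Cauchy--Schwarz on $H_s^{-1}$ is the paper's H\"older on $u$ up to the constant $(n-1)/s$. One small wording point: after inserting the bound $\tfrac14|\gamma'_s|^2\le (n-1)\alpha_s|t'(s)|^2$, the $u^{-1}$-part is \emph{bounded below by} (not ``equals exactly'') the stated quantity, but this does not affect the conclusion.
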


\begin{proof}
 By the second variation of volume and the Gauss equation,  
\begin{equation*}
\frac{\partial }{\partial s} H_s =  \frac{1}{2} R_{ \bar{\gamma}_s }   u - \Delta_s u -  \frac{1}{2} \left( | \bar A_s|_{\bar \gamma_s} ^2 +  \bar H_s^2 \right) u^{-1} .
\end{equation*}
Thus, 
\begin{equation}\label{main1}
\begin{aligned}
\dfrac{d}{ds}\int_{\Sigma_s}  H_s \, d\mu_s =\frac{1}{2}\int (\bar{H}_s^2- | \bar A_s|_{\bar \gamma_s} ^2)u^{-1} \, d\mu_s+\frac{1}{2}\int R_{ \bar{\gamma}_s}   u \, d\mu_s.
\end{aligned}
\end{equation}
By \eqref{As-Hs} and \eqref{def-alpha(t)-beta(t)}, 
\begin{equation} \label{roundness}
\left|\bar{A}_s-\frac{\bar{\gamma_s}}{s}\right|^2_{\bar{\gamma}_s}
=\left|\frac{1}{2} \frac{d \gamma (t) }{d t} \, t'(s)  \right|^2_{\gamma (t) }\leq (n-1)\alpha_s \, |t'(s)|^2=\alpha_s \, |t'(s)|^2 s\bar{H_s} .
\end{equation}
It follows from \eqref{main1} and \eqref{roundness} that
\begin{equation}\label{main2}
\begin{split}
\dfrac{d}{ds}\int_{\Sigma_s}  H_s d\mu_s & \ 
= \frac{1}{2}\int_{\Sigma_s}  \left(\frac{n-2}{s}\bar{H}_s-\left|\bar{A}_s-\frac{\bar{\gamma}}{s}\right|^2_{\bar{\gamma}_s}
\right) u^{-1} \, d\mu_s+\frac{1}{2}\int_{\Sigma_s}  R_{ \bar{\gamma}_s}  u  \, d\mu_s\\
& \ \geq \frac{1}{2} \left(\frac{n-2}{s}  -\alpha_s|t'(s)|^2s \right)  \int_{\Sigma_s}  \bar{H}_s \, u^{-1} \, d \mu_s \\
& \ +\frac{(n-1)(n-2) \beta_s }{2s^2}\int_{\Sigma_s}  u \, d\mu_s .
\end{split}
\end{equation}
By H\"{o}lder's inequality, 
$$\int_{\Sigma_s}  u \, d\mu_s \geq \dfrac{ | \Sigma_s|_{\bar \gamma_s}^2}{\int_{\Sigma_s}  u^{-1} \, d\mu_s}
=\frac{ ( s^{n-1} \omega_{n-1} )^2}{\int_{\Sigma_s}  u^{-1} \, d\mu_s}.$$
Hence, \eqref{main2} and \eqref{As-Hs} imply 
\begin{equation}
\begin{split}
\frac{d }{ds} \int_{\Sigma_s} H_s \, d \mu_s 
\geq & \ \frac{1}{2} \left(\frac{n-2}{s}-\alpha_s|t'(s)|^2{s}\right) \int_{\Sigma_s} H_s \, d \mu_s \\
& \ + \frac{(n-1)(n-2) \beta_s}{2s^2 }
\frac{s^{2n-2}\omega_{n-1}^2(n-1)}{s \int_{\Sigma_s} H_s \, d \mu_s } ,
\end{split} 
\end{equation}
which proves \eqref{eq-mean-curv-der}.
\end{proof}

\begin{rmk}
If $ \gamma $ is a round metric on $ \Sigma$, one can take 
$ \{ \gamma (t) \}_{t \in [0, 1] }$ to be a constant path of metrics. In this case, $ \alpha (t) = 0 $, $ \beta (t) = 1$, 
and \eqref{eq-mean-curv-der} becomes
\begin{equation*} 
\frac{d\mathcal{H}^2_s}{ds}\geq \frac{n-2}{s} \mathcal{H}^2_s+ (n-2)  s^{2n-5} ,
\end{equation*}
or equivalently 
\begin{equation}
\frac{d}{d s} \left\{ \left( \frac{ | \Sigma_s |_{\bar \gamma_s}  } { \omega_{n-1} } \right)^\frac{n-2}{n-1} \left[ 1 - \left( \frac{ | \Sigma_s |_{\bar \gamma_s}  } { \omega_{n-1} } \right)^\frac{2 (2-n) }{n-1} \mathcal{H}_s^2 \right] \right\} \le 0 . 
 \end{equation} 
This monotone property gives another insight into \cite[Theorem 1]{M07}.
\end{rmk}

\begin{rmk}
In deriving \eqref{eq-mean-curv-der}, one does not need $\gamma (1)$ to be a round metric; neither does $\Sigma$ need to be a
sphere. We will explore this fact in Section \ref{sec-NNSC}.
\end{rmk}

\vspace{.2cm}

In the rest of this section, we focus on the dimension $n=3$. 
In this case, $\gamma$ is a metric with nonnegative Gauss curvature $K_\gamma$ on the $2$-sphere $ \Sigma$. 
By Gauss-Bonnet theorem, 
$$ \beta(t) \le 1.$$ 
For convenience, we normalize $\gamma$ so that $ | \Sigma |_\gamma = 4 \pi$, i.e. $ r_\gamma = 1$.

Choosing $ n = 3 $ in Proposition \ref{prop-mean curv}, we have
\begin{equation}
\frac{d\mathcal{H}^2_s}{ds}- \left(\frac{1}{s}-\alpha_s|t'(s)|^2{s}\right)\mathcal{H}^2_s\geq  s \beta_s ,
\end{equation}
which implies 
\begin{equation} \label{eq-int}
\frac{d}{ds}\left( s^{-1} e^{\int_1^s \alpha_s|t'(s)|^2{s} \, ds} \,  \Hs \right)\geq  \beta_s \, e^{\int_1^s \alpha_s|t'(s)|^2{s} \, ds}.
\end{equation}
Integrating \eqref{eq-int} from 1 to $s>\ts$, we have
\begin{equation} \label{eq-ineq-Hs}
\begin{split}
& \  \frac{e^{\int_1^{\ts} \alpha_s|t'(s)|^2{s} ds}  }{ s} \Hs - \Ho \\
\geq & \ \int_1^{\ts} \beta_s e^{\int_1^s \alpha_s|t'(s)|^2{s} \, ds} \, ds + (s-\ts)e^{\int_1^{\ts} \alpha_s|t'(s)|^2 s \, ds} .
\end{split}
\end{equation}
Here we used the fact $ \gamma_s = \sigma_o $, hence $ \alpha_s = 0 $ and $ \beta_s = 1$, $\forall \, s \ge \ts $.

Re-writing \eqref{eq-ineq-Hs} as  
\begin{equation}\label{eq-inequality-Hs}
\frac{ \Hs }{ s}- e^{-\int_1^{\ts} \alpha_s|t'(s)|^2{s} \, ds} \, \Ho \geq (s-\ts)
+ \int_1^{\ts} \beta_s e^{ - \int_{s}^\ts  \alpha_s|t'(s)|^2{s} \, ds} \, ds ,
\end{equation}
letting $ s \to \infty$ and applying \eqref{eq-Hs-expansion},
we obtain
\begin{equation} \label{eq-est-mg}
2 \m (g) \le   \ts  -  \int_1^{\ts} \beta_s e^{ - \int^{\ts}_s  \alpha_s|t'(s)|^2{s} \, ds} \, ds - e^{-\int_1^{\ts} \alpha_s|t'(s)|^2{s} \, ds} \,  \Ho .
\end{equation} 

By Lemma \ref{lem-quasi-spherical}, $(M, g)$ is an asymptotically flat extension of $(\Sigma, \gamma, H)$, $(M, g)$ has zero 
scalar curvature and is foliated by positive mean curvature surfaces $\{ \Sigma_s \}$.
Hence, by \eqref{eq-est-mg} and the definition of the Bartnik mass,
\begin{equation} \label{eq-mB-1}
\begin{split}
\m_B (\Sigma, g, H) \le & \ \m (g) \\
\le & \  \frac12 \left[ \ts -  \int_1^{\ts} \beta_s e^{ - \int^{\ts}_s  \alpha_s|t'(s)|^2{s} \, ds} \, ds 
 - e^{-\int_1^{\ts} \alpha_s|t'(s)|^2{s} \, ds} \,  \Ho \right].
\end{split}
\end{equation}
In general, when $(\Sigma, \gamma)$ does not necessarily have area $4\pi$, it is easily checked 
\begin{equation} \label{eq-mB-scaling} 
\m (\Sigma, g, H)  = r_\gamma \, \m (\Sigma, r_\gamma^{-2} \gamma, r_\gamma H ) .
\end{equation}
The following proposition follows from \eqref{eq-mB-1} and \eqref{eq-mB-scaling}. 

\begin{prop} \label{prop-est-1}
Let $ \gamma $ be a metric with nonnegative Gauss curvature on the two-sphere $ \Sigma$. 
Let $ H  $ be a positive function on $\Sigma$. 
Suppose $ \{ \gamma (t) \}_{ t \in [0, 1] }$ is a path of metrics satisfying i), ii) and iii). 
Given any constant $ b > 1$ and any $C^1$ function 
\begin{equation} \label{rep-1}
 t( \cdot) : [1, b] \rightarrow [0, 1] \ \text{with} \ t(1) = 0 \ \text{and} \ t (b) = 1 , 
\end{equation} 
the Bartink mass $\m_B (\Sigma, \gamma, H)$ satisfies
\begin{equation} \label{eq-mB-2}
\m_B (\Sigma, g, H) \leq  \frac{r_\gamma}{2} \left[ \ts  -  \int_1^{\ts} \beta_s e^{ - \int^{\ts}_s  \alpha_s|t'(s)|^2{s} \, ds} \, ds 
- e^{-\int_1^{\ts} \alpha_s|t'(s)|^2{s} \, ds}  \, \mathcal{H}^2 \right].
\end{equation}
where $ r_\gamma = \sqrt{ \frac{ | \Sigma |_\gamma} { 4 \pi} } $ 
and $ \mathcal{H} =  \frac{ 1}{ 8 \pi  r_\gamma } \int_\Sigma H \, d \mu_\gamma $. 
\end{prop}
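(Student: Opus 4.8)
The plan is to read Proposition~\ref{prop-est-1} off from the inequality~\eqref{eq-mB-1}, derived above under the normalization $|\Sigma|_\gamma=4\pi$ and for a \emph{smooth} reparametrization $t(\cdot):[1,\infty)\to[0,1]$ that is eventually equal to $1$, together with the scaling relation~\eqref{eq-mB-scaling}. Two points need attention: removing the $4\pi$ normalization, and checking that \eqref{eq-mB-1} remains valid when $t$ is only $C^1$ and is required to attain the value $1$ merely at the finite parameter $s=b$. Everything else is bookkeeping.

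First I would remove the normalization. Given $\gamma$ with $|\Sigma|_\gamma=4\pi r_\gamma^2$, set $\tilde\gamma=r_\gamma^{-2}\gamma$, $\tilde H=r_\gamma H$, so $|\Sigma|_{\tilde\gamma}=4\pi$. The rescaled path $\{r_\gamma^{-2}\gamma(t)\}$ satisfies i)--iii) for $\tilde\gamma$; since $\alpha(t)$ and $\beta(t)$ are scaling invariant the functions $\alpha_s,\beta_s$ are unchanged; and $\frac1{8\pi}\int_\Sigma\tilde H\,d\mu_{\tilde\gamma}=\frac1{8\pi r_\gamma}\int_\Sigma H\,d\mu_\gamma=\mathcal H$, so the quantity $\Ho$ appearing in \eqref{eq-mB-1} for the triple $(\Sigma,\tilde\gamma,\tilde H)$ equals $\mathcal H^2$. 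Hence \eqref{eq-mB-1}, applied to $(\Sigma,\tilde\gamma,\tilde H)$, becomes $\m_B(\Sigma,\tilde\gamma,\tilde H)\le\frac12\big[\,b-\int_1^b\beta_s e^{-\int_s^b\alpha_s|t'(s)|^2 s\,ds}\,ds-e^{-\int_1^b\alpha_s|t'(s)|^2 s\,ds}\,\mathcal H^2\,\big]$; multiplying by $r_\gamma$ and using \eqref{eq-mB-scaling} gives exactly \eqref{eq-mB-2}. So it remains to prove \eqref{eq-mB-1} for an arbitrary $C^1$ map $t:[1,b]\to[0,1]$ with $t(1)=0$, $t(b)=1$.

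For that, extend $t$ to $[1,\infty)$ by $t(s):=1$ for $s\ge b$; the extension is Lipschitz, smooth on $(b,\infty)$, and $C^1$ off $s=b$. Along $\bar g=ds^2+s^2\gamma_s$ with $\gamma_s=r_\gamma^{-2}\gamma(t(s))$, the data $\bar\gamma_s$, $\bar H_s=2/s$, $\partial_s\bar H_s$, $K_{\bar\gamma_s}$ in the quasi-spherical equation~\eqref{eq-u-PDE} depend continuously on $s$ (smoothly on $\Sigma$), while $\bar A_s$ is continuous except for a bounded jump at $s=b$; so the construction behind Lemma~\ref{lem-quasi-spherical} still yields a positive $u$ on $[1,\infty)$ that is $C^1$ in $s$ on $[1,b)\cup(b,\infty)$ and Lipschitz across $s=b$, with $(M,g)=(M,u^2ds^2+\bar\gamma_s)$ asymptotically flat and scalar-flat (the metric is exactly Euclidean for $s>b$, which is all \cite[Theorem~2.1]{ST} uses). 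Then $\mathcal H_s$ is Lipschitz in $s$, Proposition~\ref{prop-mean curv} holds for a.e.\ $s\ge1$, and integrating exactly as in \eqref{eq-int}--\eqref{eq-est-mg} produces \eqref{eq-mB-1}. If one prefers to avoid Lipschitz data, the same estimate follows by approximating $t$ in $C^1$ by smooth maps $t_k:[1,b_k]\to[0,1]$ with $b_k\downarrow b$, $t_k(1)=0$, $t_k\equiv1$ near $b_k$, and $\int_1^{b_k}\alpha(t_k)|t_k'(s)|^2 s\,ds\to\int_1^{b}\alpha(t)|t'(s)|^2 s\,ds$ — the contribution of the short interval on which $t_k$ is modified is $O(b_k-b)$ because $t'$ is bounded and $\alpha,\beta$ are continuous along the path — and letting $k\to\infty$ in \eqref{eq-mB-1}, using $0\le\beta_s\le1$ to pass the limit under the integrals. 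This regularity step is the only part of the argument that is not purely formal.
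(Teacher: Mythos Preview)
Your proof is correct and follows the paper's own argument: deduce \eqref{eq-mB-2} from \eqref{eq-mB-1} via the scaling relation \eqref{eq-mB-scaling}, and handle the passage from smooth to merely $C^1$ reparametrizations by approximation. The paper carries out precisely your option (b) in the Remark following the proposition---compressing the graph of the extended function $T$ horizontally (so the range stays in $[0,1]$ and flat pieces appear near both endpoints) and then mollifying, passing to the limit using that the right side of \eqref{eq-mB-1} depends on $t(\cdot)$ only through $t'$ on $[1,b]$---while your option (a) of running the quasi-spherical construction directly with Lipschitz-in-$s$ data is not pursued there.
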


\begin{rmk}
We comment on the $C^1$ assumption on the function $t(s)$ in \eqref{eq-mB-2}. 
The argument preceding \eqref{eq-mB-1} readily shows \eqref{eq-mB-2} holds for any $ t(\cdot): [1, b] \rightarrow [0, 1]$ which 
is the restriction of a smooth function $ t(\cdot) : [1, \infty) \rightarrow [0,1]$ satisfying \eqref{rep}.
Now if $ t(\cdot) $ is merely $C^1$ on $[0,1]$ satisfying \eqref{rep-1}, 
one can consider a function $T(s)$ on $[1,\infty)$ so that $ T (s) = t(s)$ on $[1,\ts]$ and $T(s)=1$ for $s>\ts$. 
$T(s)$ may not be smooth on $[1, \infty)$, but one can mollify it. 
For instance, one can first compress the graph of $ T(s)$  horizontally by a small factor $\epsilon>0$
and denote such a function by $T_\epsilon (s)$.
Then one can smooth $ T_\epsilon $ out via a usual mollifier $\phi_{\epsilon_1}>0$, with $\epsilon_1>0$ small enough 
so that $ T_\epsilon \ast \phi_{\epsilon_1}  =   T_\epsilon $  near the points $1$ and $b$ (see Figure 1). 
As the right side of \eqref{eq-mB-1} depends on $ t(\cdot) $ only via $ t'(s)$ on $[1, b]$, 
 letting  $\epsilon_1$ and $ \epsilon$ tend to $0$,  one obtains Proposition \ref{prop-est-1}.
\end{rmk}

\begin{center}

\begin{figure}
\begin{tikzpicture}[
    scale=1.8,
    axis/.style={thick, ->, >=stealth'},
    important line/.style={thick},
    dashed line/.style={dashed, thick},
    every node/.style={color=black,},
    use Hobby shortcut
 ]

\draw[axis] (0,0)  -- (3,0) node(xline)[right] {$s$};
\foreach \x/\xtext in {.5/$1$,2/$\ts$}{
      \draw (\x cm,-1pt) -- (\x cm,1pt) node[below,pos=-.01pt] {\xtext};
    }
\foreach \y in {1}
    \draw (1pt,\y cm) -- (-1pt,\y cm) node[anchor=east] {$\y$};     

\draw[axis] (0,0) -- (0,1.3) node(yline)[above] {$t$};
\draw[dashed] (2,0)--(2,1);

%\draw (0,0) .. controls (0,2) and (2,2) .. (2,2);
%\draw (2,2) .. controls (4,2) and (4,0) .. (6,-2);
\draw (2,1.2) node{$T(s)$};
\draw[very thick] ([out angle=55].5,0) .. (2,1);
\draw[very thick] (2,1) .. (3,1);

\end{tikzpicture}
\hspace{1cm}
 \begin{tikzpicture}   [
    scale=1.8,
    axis/.style={thick, ->, >=stealth'},
    important line/.style={thick},
    dashed line/.style={dashed, thick},
    every node/.style={color=black,},
    use Hobby shortcut
 ]
  
      \draw[axis] (0,0)  -- (3,0) node(xline)[right] {$s$};
\foreach \x/\xtext in {.5/1,.7/\hspace{.6cm}$1\hspace{-.1cm}+\hspace{-.1cm}\epsilon$,1.8/\hspace{-.5cm}$\ts  \hspace{-.1cm}-\hspace{-.1cm}\epsilon$,2/\hspace{.1cm}$\ts$}{
      \draw (\x cm,-1pt) -- (\x cm,1pt) node[below,pos=-.01pt] {\xtext};
    }
\foreach \y in {1}
    \draw (1pt,\y cm) -- (-1pt,\y cm) node[anchor=east] {$\y$};     

\draw[axis] (0,0) -- (0,1.3) node(yline)[above] {$t$};
\draw[dashed] (2,0)--(2,1);

%\draw[gray, very thin] (1.8,0) .. (1.8,1);
%\draw[gray, very thin] (2,0) .. (2,1);
\draw (1.89,.75) node[text=red]{$\phi_{\epsilon_1}\hspace{-.15cm}\ast T_\epsilon$};
\draw (2,1.2) node[text=blue]{$T_\epsilon(s)$};
\draw[red, very thick] ([out angle=5].5,0) .. (.857,.23);
\draw[red, very thick] (1.46,.8) .. ([out angle=35]2,1);
\draw[blue, very thick] ([out angle=58].7,0) .. (1.8,1);
\draw[blue, very thick] (1.79,1) .. (3,1);
\end{tikzpicture}

 \caption{ On the left is the graph of $T(s)$, with a corner at $\ts$. On the right in blue is the compression $T_\epsilon(s)$ of $T(s)$ and in red its smoothing using a mollifier $\phi_{\epsilon_1}$ that preserves the endpoints.}
       
\label{fig1} 

\end{figure}
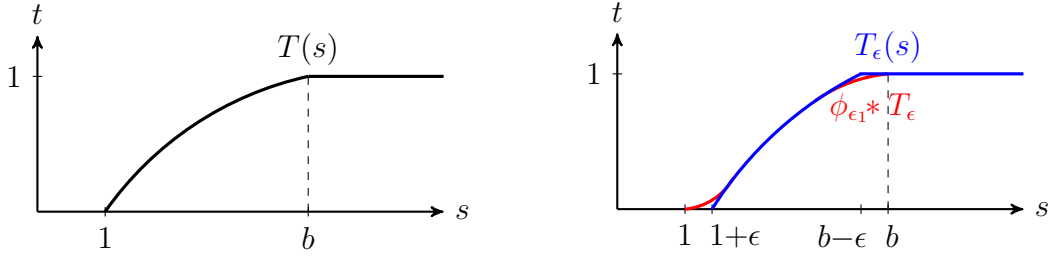

\end{center}

In Proposition \ref{prop-est-1}, the function $ t(s)$ does not need to be monotone. 
If $t(s)$ is chosen to be monotone,
\eqref{eq-mB-2} can be reformulated in terms of the inverse function $s=s(t)$. 
More precisely, Proposition \ref{prop-est-1} shows 

\begin{prop} \label{prop-est-2}
Let $ \gamma $ be a metric with nonnegative Gauss curvature on the two-sphere $ \Sigma$. 
Let $ H  $ be a positive function on $\Sigma$. 
Suppose $ \{ \gamma (t) \}_{ t \in [0, 1] }$ is a path of metrics satisfying i), ii) and iii). 
Given  any  $C^1$ function 
\begin{equation} \label{eq-condition-st}
 s = s (t), \,  t \in [0, 1], \, \text{with} \ s (0) = 1 \ \text{and} \ s'(t) > 0, 
\end{equation}
the Bartink mass $\m_B (\Sigma, \gamma, H)$ satisfies
\begin{equation} \label{eq-mB-3}
\begin{split}
\m_B (\Sigma, g, H) \leq & \  \frac{ r_\gamma}{2} 
\left[ s(1)  -  \int_0^{1}  \frac{ \beta (t)   s'(t)  }{ e^{ \int^{1}_t   \alpha(t)  \frac{s(t)}{s'(t)}  \, dt} }  \, d t 
- \frac{1}{e^{  \int_0^{1} \alpha(t)  \frac{s(t)}{s'(t)}  \, dt}  } \, \mathcal{H}^2 \right] ,
\end{split}
\end{equation}
where $ r_\gamma = \sqrt{ \frac{ | \Sigma |_\gamma} { 4 \pi} } $ 
and $ \mathcal{H} =  \frac{ 1}{ 8 \pi  r_\gamma } \int_\Sigma H \, d \mu_\gamma $. 
\end{prop}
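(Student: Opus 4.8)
The plan is to obtain Proposition~\ref{prop-est-2} as a direct reformulation of Proposition~\ref{prop-est-1} via the substitution that turns a monotone reparameterization into its inverse. Given a $C^1$ function $s=s(t)$ on $[0,1]$ satisfying \eqref{eq-condition-st}, I would first set $b:=s(1)$, which is automatically $>1$ since $s(0)=1$ and $s'(t)>0$. Because $s'(t)>0$ everywhere, $s(\cdot)$ is a $C^1$ diffeomorphism onto $[1,b]$; let $t(\cdot):[1,b]\to[0,1]$ be its inverse. Then $t(\cdot)$ is $C^1$, strictly increasing, with $t(1)=0$ and $t(b)=1$, so it is an admissible choice for the function in \eqref{rep-1}, and Proposition~\ref{prop-est-1} applies to it. The inverse function theorem gives $t'(s)=1/s'(t(s))$, and for $\sigma=s(\tau)$ one has $t(\sigma)=\tau$, hence $\alpha_\sigma=\alpha(\tau)$ and $\beta_\sigma=\beta(\tau)$ along the curve; these are the only identities the computation needs.

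Next I would substitute $s=s(t)$, $ds=s'(t)\,dt$, in each of the three terms on the right side of \eqref{eq-mB-2}. The first term is simply $b=s(1)$. For the middle term $\int_1^{b}\beta_s\,e^{-\int_s^{b}\alpha_\sigma|t'(\sigma)|^2\sigma\,d\sigma}\,ds$, the outer substitution produces a factor $s'(t)$ and changes the limits to $0$ and $1$; inside the exponent, substituting $\sigma=s(\tau)$ and using $|t'(\sigma)|^2=s'(\tau)^{-2}$ together with $\sigma\,d\sigma=s(\tau)s'(\tau)\,d\tau$ collapses the inner integral to $\int_{t}^{1}\alpha(\tau)\,\frac{s(\tau)}{s'(\tau)}\,d\tau$, where the lower limit $t$ arises because $\sigma=s(t)$ corresponds to $\tau=t(s(t))=t$. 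The last term transforms identically: $\int_1^{b}\alpha_s|t'(s)|^2 s\,ds=\int_0^1\alpha(t)\,\frac{s(t)}{s'(t)}\,dt$. Assembling these pieces reproduces exactly the right side of \eqref{eq-mB-3}.

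I do not expect a genuine obstacle here; the only point requiring care is the bookkeeping of the nested integration limits through the change of variables—in particular that the inner lower limit $s$ becomes $t(s)=t$, and that $t(1)=0$, $t(b)=1$ so that the outer limits become $0$ and $1$. It is worth recording that monotonicity of $s(t)$ is exactly what makes the inverse $t(s)$ well defined; estimate~\eqref{eq-mB-2} itself imposes no monotonicity on $t(s)$, so Proposition~\ref{prop-est-2} is strictly a convenient special case of Proposition~\ref{prop-est-1} rather than a new statement, and this is how the ``proof'' should be framed.
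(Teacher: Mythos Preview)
Your proposal is correct and matches the paper's own approach exactly: the paper states that if $t(s)$ in Proposition~\ref{prop-est-1} is chosen monotone, then \eqref{eq-mB-2} can be reformulated in terms of the inverse $s=s(t)$, and presents Proposition~\ref{prop-est-2} as the result of this change of variables. Your bookkeeping of the limits and the identity $|t'(s)|^2 s\,ds = \frac{s(t)}{s'(t)}\,dt$ is precisely what is needed, and your closing remark that this is a convenient special case rather than a new statement is in the same spirit as the paper's framing.
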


Given any $C^0$ function $ \phi (t) > 0 $ on $[0,1]$ and any constant $ k > 0$, plugging in \eqref{eq-mB-3} a choice of 
$$ s (t) = 1 + k \int_0^t \phi (t) d t  , $$
one has
\begin{equation} \label{eq-mB-5}
\begin{split}
\m_B (\Sigma, g, H) \leq & \  \frac{r_\gamma}{2} 
\left[ 1 + k \int_0^1 \left( 1 -  \frac{ \beta (t)      }{ e^{ \int^{1}_t   \alpha(t)  \left( \frac{ 1 }{ k \phi (t) } 
+ \frac{ \int_0^t \phi (t) d t }{  \phi (t) }  \right)   \, dt} } \right) \phi(t) \,  d t  \right. \\
& \ \left. 
- \frac{1}{e^{ \int^{1}_0   \alpha(t)  \left( \frac{ 1 }{ k \phi (t) } 
+ \frac{ \int_0^t \phi (t) d t }{  \phi (t) }  \right)   \, dt}  } \, \mathcal{H}^2 \right].
\end{split}
\end{equation}
Letting $ k \to 0 +$ in \eqref{eq-mB-5} gives the following corollary:

\begin{cor} \label{cor-gen-MX}
Let $ \gamma $ be a metric of nonnegative Gauss curvature on the $2$-sphere $ \Sigma$. 
Let $ H  $ be a positive function on $\Sigma$.  Then
\be
\m_B (\Sigma, g, H) \leq  \frac{r_\gamma}{2} .
\ee
\end{cor}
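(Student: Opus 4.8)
The plan is to read the bound off directly from inequality \eqref{eq-mB-5} by sending the free parameter $k$ to $0$, after controlling the bracket on its right-hand side uniformly in $k$. Recall that \eqref{eq-mB-5} is Proposition \ref{prop-est-2} (equivalently Proposition \ref{prop-est-1}) specialized to a fixed Mantoulidis--Schoen type path $\{\gamma(t)\}_{t\in[0,1]}$ satisfying i), ii), iii) --- such a path exists by the construction recalled in Section \ref{sec-uniformization} --- together with the reparametrization $s(t)=1+k\int_0^t\phi(\tau)\,d\tau$, where $\phi>0$ is any continuous function on $[0,1]$ and $k>0$ is arbitrary. I would fix one such $\phi$ (say $\phi\equiv1$, so $s(t)=1+kt$) and then simply estimate the $k$-dependent terms in \eqref{eq-mB-5}.

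The argument rests on three elementary observations. First, since $\alpha(t)\ge0$ along the path and $s,s'>0$, every exponent appearing in \eqref{eq-mB-5}, that is, an integral of the shape $\int_t^1\alpha(t)\big(\tfrac{1}{k\phi(t)}+\cdots\big)\,dt$, is nonnegative; hence each exponential factor is $\ge1$, so the term $e^{-\int_0^1\alpha(t)(\cdots)\,dt}\,\mathcal{H}^2$ is nonnegative and $\beta(t)\,e^{-\int_t^1\alpha(t)(\cdots)\,dt}\le\beta(t)$. Second, $0\le\beta(t)\le1$ on $[0,1]$: the upper bound is the Gauss--Bonnet theorem (condition iii) makes the path area preserving), and the lower bound holds because $\gamma(t)$ has positive scalar curvature for $t>0$ while $\gamma(0)=\gamma$ has nonnegative Gauss curvature --- this is exactly where the hypothesis on $\gamma$ enters. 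Combining the two, $0\le 1-\beta(t)\,e^{-\int_t^1\alpha(t)(\cdots)\,dt}\le1$ for all $t$. Third, $\phi$ is continuous on the compact interval $[0,1]$, so $\int_0^1\phi(t)\,dt<\infty$.

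Putting these together, the bracket on the right-hand side of \eqref{eq-mB-5} is at most $1+k\int_0^1\phi(t)\,dt$: discard the nonnegative $\mathcal{H}^2$-term and bound the integrand of the $k$-dependent integral pointwise by $\phi(t)$. Therefore
\begin{equation*}
\m_B(\Sigma,\gamma,H)\leq\frac{r_\gamma}{2}\left(1+k\int_0^1\phi(t)\,dt\right)
\end{equation*}
for every $k>0$, and letting $k\to0^+$ gives $\m_B(\Sigma,\gamma,H)\leq\frac{r_\gamma}{2}$. I do not expect any genuine obstacle here: the only points requiring care are the sign of $\beta$ at the endpoint $t=0$ (precisely where nonnegativity of $K_\gamma$ is used) and the bookkeeping of the area-radius normalization, both of which are already incorporated into \eqref{eq-mB-5}; everything else is a one-line limit.
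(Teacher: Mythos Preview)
Your proposal is correct and follows essentially the same approach as the paper: the paper simply states ``Letting $k\to0^+$ in \eqref{eq-mB-5} gives the following corollary,'' and you have spelled out the implicit uniform bound (namely that the bracket in \eqref{eq-mB-5} is at most $1+k\int_0^1\phi$, using $\beta(t)\in[0,1]$ and the nonnegativity of the $\mathcal{H}^2$ term) that makes this limit work.
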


\vspace{.2cm}

For a fixed path $\{ \gamma (t) \}_{t \in [0,1]}$, an optimal estimate on $ \m_B(\cdot)$ from
Proposition \ref{prop-est-2} would be obtained by minimizing the right side of \eqref{eq-mB-3} over all $C^1$ function 
$ s(t)$ satisfying \eqref{eq-condition-st}. At the moment, we do not know a formula of such an infimum. Below, we proceed 
using an ad hoc ODE method to pick a choice of $s(t)$.

Suggested by 
$$ \left( e^{ - \int^{1}_t   \alpha(t)  \frac{s(t)}{s'(t)}  \, dt} \right)' =  e^{ - \int^{1}_t   \alpha(t)  \frac{s(t)}{s'(t)}  \, dt} \, \alpha(t) s(t) s'(t)^{-1} , $$
we choose $s(t)$ so that 
\begin{equation}\label{eq-ode-st}
\beta (t) s'(t) = k^2 \alpha(t) s(t) s'(t)^{-1}  ,
\end{equation} 
where $ k > 0 $ is an arbitrary constant. Clearly, \eqref{eq-ode-st} and $ s(0) = 1 $ shows
\begin{equation}
s (t) = \left( 1 + k \int_0^t \sqrt{ \frac{\alpha (t) }{ 4 \beta (t) } } \, d t \right)^2 , \ \ \text{if} \ 
{ \int_0^1 \sqrt{ \frac{\alpha (t)}{ \beta (t) }  } < \infty }.
\end{equation}
With this choice of $s(t)$, we have 
\be \label{eq-to-be-simplified}
\begin{split}
& \ s(1)  -  \int_0^{1}  \frac{ \beta (t)   s'(t)  }{ e^{ \int^{1}_t   \alpha(t)  \frac{s(t)}{s'(t)}  \, dt} }  \, d t 
- \frac{1}{e^{  \int_0^{1} \alpha(t)  \frac{s(t)}{s'(t)}  \, dt}  } \, \frac{  \left( \int_\Sigma H \, d \mu_\gamma \right)^2 }{16 \pi | \Sigma |_\gamma } \\
= & \ \left( 1 + k \int_0^1 \sqrt{ \frac{\alpha (t) }{ 4 \beta (t) } } \, d t \right)^2
- k^2  + e^{ - \int^{1}_0  \alpha(t)  \frac{s(t)}{s'(t)}  \, dt}  
\left[ k^2 -   \frac{ \left(  \int_\Sigma H \, d \mu_\gamma \right)^2 }{16 \pi | \Sigma |_\gamma } \right] .
\end{split}
\ee
To simplify the above quantity,  we may choose $ k $ so that 
\be
k =  \frac{  1 }{ 8 \pi  r_\gamma }  \int_\Sigma H \, d \mu_\gamma .
\ee

Thus, the following corollary follows from Proposition \ref{prop-est-2} and a choice of
\be
s (t) = \left( 1 + \frac{  \int_0^t \sqrt{ \frac{\alpha (t) }{ 4 \beta (t) } } \, d t  }{ 8 \pi  r_\gamma }  \int_\Sigma H \, d \mu_\gamma \right)^2 .
\ee
 
\begin{cor} \label{cor-est-1}
Let $ \gamma $ be a metric with nonnegative Gauss curvature on the two-sphere $ \Sigma$. 
Let $ H  $ be a positive function on $\Sigma$. 
Suppose $ \{ \gamma (t) \}_{ t \in [0, 1] }$ is a path of metrics satisfying i), ii) and iii). 
Then the Bartink mass $\m_B (\Sigma, \gamma, H)$ satisfies
\begin{equation} \label{eq-mB-4}
\m_B (\Sigma, g, H) \leq  \sqrt{ \frac{ | \Sigma |_\gamma} { 16 \pi} } 
\left[ \left( 1 + \frac{ \zeta (\gamma)  }{ 8 \pi  r_\gamma }  \int_\Sigma H \, d \mu_\gamma \right)^2
- \left( \frac{  1 }{ 8 \pi  r_\gamma }  \int_\Sigma H \, d \mu_\gamma \right)^2   \right].
\end{equation}
Here $\zeta ( \gamma ) \ge 0 $ is a scaling invariant quantity of $\gamma$, given by
\be \label{eq-def-zeta}
\zeta (\gamma) = \inf_{ \{ \gamma (t) \}_{t\in[0,1] } } \int_0^1 \sqrt{ \frac{\alpha (t) }{ 4 \beta (t) } } \, d t  .
\ee
\end{cor}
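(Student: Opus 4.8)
The plan is to obtain \eqref{eq-mB-4} directly from Proposition \ref{prop-est-2} by inserting the explicit choice
$$ s(t) = \left( 1 + \frac{1}{8\pi r_\gamma}\left(\int_\Sigma H\,d\mu_\gamma\right)\int_0^t \sqrt{\tfrac{\alpha(t)}{4\beta(t)}}\,dt\right)^2, $$
computing the right side of \eqref{eq-mB-3} for this choice, and then taking the infimum over admissible paths $\{\gamma(t)\}$. First I would observe that this $s(t)$ does satisfy the hypothesis \eqref{eq-condition-st}: $s(0)=1$, and $s'(t)>0$ since $\alpha(t)\geq 0$ — here one must momentarily assume the integrand $\sqrt{\alpha/\beta}$ is integrable (if it is not, that path contributes $+\infty$ to the infimum in \eqref{eq-def-zeta} and may be discarded, so there is no loss). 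With $k = \frac{1}{8\pi r_\gamma}\int_\Sigma H\,d\mu_\gamma$, the identity \eqref{eq-ode-st}, namely $\beta(t)s'(t) = k^2\alpha(t)s(t)s'(t)^{-1}$, holds by direct differentiation, and this is exactly the algebraic relation that makes the $\int_0^1 \beta(t)s'(t)e^{-\int_t^1 \alpha s/s'}\,dt$ term collapse.

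The second step is the simplification recorded in \eqref{eq-to-be-simplified}. Using \eqref{eq-ode-st} to rewrite $\beta(t)s'(t)$ as $k^2\alpha(t)s(t)s'(t)^{-1}$, the middle integral becomes $k^2\int_0^1 \alpha(t)s(t)s'(t)^{-1}e^{-\int_t^1 \alpha s/s'}\,dt = k^2\int_0^1 \frac{d}{dt}\!\left(e^{-\int_t^1 \alpha s/s'}\right)dt = k^2\left(1 - e^{-\int_0^1 \alpha s/s'}\right)$, using that the exponent vanishes at $t=1$. Substituting this and $s(1) = (1 + k\int_0^1\sqrt{\alpha/4\beta})^2$ into the bracket of \eqref{eq-mB-3}, and recalling $\mathcal{H}^2 = k^2$ with the present normalization, the bracket equals
$$ \left(1 + k\int_0^1\sqrt{\tfrac{\alpha}{4\beta}}\,dt\right)^2 - k^2 + e^{-\int_0^1 \alpha s/s'}\left[k^2 - k^2\right], $$
and the final square bracket is zero — that is precisely why $k$ was chosen equal to $\mathcal{H}$. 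So the exponential factor, which we have no good control over, drops out entirely, leaving the clean bound $\m_B(\Sigma,\gamma,H) \leq \frac{r_\gamma}{2}\big[(1 + k\int_0^1\sqrt{\alpha/4\beta})^2 - k^2\big]$.

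The third step is to insert $k = \frac{1}{8\pi r_\gamma}\int_\Sigma H\,d\mu_\gamma$, write $\frac{r_\gamma}{2} = \sqrt{|\Sigma|_\gamma/16\pi}$, recognize $\int_0^1\sqrt{\alpha/4\beta}\,dt$ as the quantity whose infimum over paths defines $\zeta(\gamma)$ in \eqref{eq-def-zeta}, and finally take the infimum over all paths $\{\gamma(t)\}$ satisfying i), ii), iii). Since the path enters the bound only through the factor $\int_0^1\sqrt{\alpha/4\beta}\,dt$ and the bound is monotone increasing in this factor (as $k>0$ and $1 + k\int_0^1\sqrt{\alpha/4\beta} > 0$), passing to the infimum replaces $\int_0^1\sqrt{\alpha/4\beta}$ by $\zeta(\gamma)$ and yields \eqref{eq-mB-4}; scaling invariance and nonnegativity of $\zeta(\gamma)$ are immediate from scaling invariance of $\alpha(t),\beta(t)$ (noted after \eqref{def-alpha(t)-beta(t)}) and $\alpha,\beta\geq 0$. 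I do not anticipate a serious obstacle: everything is a substitution and a telescoping integral. The only point requiring a word of care is the integrability of $\sqrt{\alpha/\beta}$ near $t=0$, where $\beta$ could a priori degenerate; but as noted, non-integrable paths simply do not participate in the infimum, so the statement is unaffected, and I would add one sentence to that effect.
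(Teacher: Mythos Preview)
Your proposal is correct and follows essentially the same route as the paper: both start from Proposition~\ref{prop-est-2}, impose the ODE relation \eqref{eq-ode-st} to obtain $s(t) = (1 + k\int_0^t \sqrt{\alpha/4\beta})^2$, compute the simplification \eqref{eq-to-be-simplified} via the same telescoping, and then set $k = \mathcal{H}$ so that the uncontrolled exponential drops out. Your write-up is if anything slightly more explicit than the paper's about the telescoping step and about why passing to the infimum over paths is legitimate (monotonicity in $\int_0^1\sqrt{\alpha/4\beta}$); one small caveat is that ``$s'(t)>0$ since $\alpha(t)\ge 0$'' should read $\alpha(t)>0$, and when $\alpha$ vanishes somewhere one needs a word of approximation---but the paper glosses over the same point.
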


\section{Estimates of $\zeta (\gamma) $}\label{sec-uniformization}

Given a metric $\gamma$ on a two-sphere $\Sigma$, by the uniformization theorem,  
there exists a function $\varphi$ such that
\begin{equation}  \label{eq-uniformization-1}
r_\gamma^{-2} \gamma =e^{2\varphi}\sigma_o.
\end{equation}
Here $\sigma_o$ is a round metric on $ \Sigma$ with area $|\Sigma|_{\sigma_0}=4\pi$ and $ r_\gamma$ is the area
radius of $\gamma$.  
Let $ d \mu_o$ denote the area form of $\sigma_o$. Then 
$ 
\int_\Sigma e^{2\varphi}d\mu_0=4\pi.
$ 
In particular, $\varphi$ satisfies 
\begin{equation}\label{eq-min-conformal}
\min_{\Sigma} e^{2 \varphi} \le 1  \ \ \text{and} \ \ \min_{\Sigma} e^{-2 \varphi}\le 1     .
\end{equation}
As a result, there exists some point $p \in \Sigma$ so that $\varphi(p)=0$. 
Consequently, 
\begin{equation}\label{eq-bound-C0-norm-with-C1}
\norm{\varphi}_0\leq C_1 \norm{d\varphi}_0 ,
\end{equation}
where $C_1 $ is an absolute constant and $\norm{\cdot}_{0}$ denotes 
the $C^0$-norm of tensors on $(\Sigma, \sigma_o)$. 
Similarly, given any $\alpha \in (0,1)$, 
if  $\lsm \varphi \rsm_\alpha$ denotes a H\"{o}lder semi-norm of $ \varphi$ given by 
\begin{equation*}
\lsm \varphi\rsm_\alpha = \sup_{x, y \in \Sigma, \, x \neq y} \frac{|\varphi(x)-\varphi(y)|}{ d (x, y)^\alpha},
\end{equation*}
where $ d (x, y)$ is the distance on $(\Sigma, \sigma_o)$,
then
\begin{equation}\label{eq-bound-alpha-sn}
\lsm \varphi\rsm_\alpha\leq C_2 \norm{d\varphi}_{0},
\end{equation}
for some absolute constant $C_2$.   

The next proposition gives an estimate of $\zeta (\gamma)$ in terms of $\varphi$. 

\begin{prop} \label{prop-zeta-1}
Let $ \gamma $ be a metric with nonnegative Gauss curvature on a $2$-sphere $ \Sigma$. 
Let $ \zeta (\gamma)$ be given in \eqref{eq-def-zeta}. 
Let $ \varphi $ be a conformal factor in \eqref{eq-uniformization-1}. Then
\be \label{eq-est-zeta-1}
\zeta (\gamma)  \leq
C    e^{6 \norm{\varphi}_{0}}     \norm{\varphi}_{0, \alpha}   \left( 1 +  \norm{\varphi}_{0, \alpha}    \right)
  \left( 1 + \norm{d\varphi}_0 \right)    .
\ee
Here $ C $ is some constant depending on $\alpha$. 
\end{prop}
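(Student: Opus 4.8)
The plan is to exhibit one explicit competitor path in the infimum \eqref{eq-def-zeta} and to estimate $\alpha(t),\beta(t)$ along it directly in terms of $\varphi$. After normalizing so that $r_\gamma=1$ and $\gamma=e^{2\varphi}\sigma_o$, set $\hat\gamma(t)=e^{2(1-t)\varphi}\sigma_o$ for $t\in[0,1]$, so $\hat\gamma(0)=\gamma$ and $\hat\gamma(1)=\sigma_o$, and rescale $\tilde\gamma(t)=\lambda(t)^2\hat\gamma(t)$ with $\lambda(t)^2=4\pi\big(\int_\Sigma e^{2(1-t)\varphi}\,d\mu_o\big)^{-1}$ to keep the area equal to $4\pi$; since $\int_\Sigma e^{2\varphi}\,d\mu_o=4\pi$ we still have $\tilde\gamma(0)=\gamma$, $\tilde\gamma(1)=\sigma_o$, and $e^{-2\|\varphi\|_0}\le\lambda(t)^2\le e^{2\|\varphi\|_0}$. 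The conformal change of Gauss curvature gives $K_{\tilde\gamma(t)}=\lambda(t)^{-2}e^{-2(1-t)\varphi}\big(1-(1-t)\Delta_{\sigma_o}\varphi\big)$, and since $K_\gamma\ge0$ forces $1-\Delta_{\sigma_o}\varphi=e^{2\varphi}K_\gamma\ge0$, we obtain $1-(1-t)\Delta_{\sigma_o}\varphi=t+(1-t)e^{2\varphi}K_\gamma\ge t$. Hence $\tilde\gamma(t)$ has positive Gauss curvature for $t>0$ (condition ii) and $\beta(t)=\min_\Sigma K_{\tilde\gamma(t)}\ge e^{-4\|\varphi\|_0}\,t$, so $\beta$ degenerates only linearly at $t=0$; this is what makes $\int_0^1\sqrt{\alpha/\beta}\,dt$ converge even when $K_\gamma$ vanishes somewhere.

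Because $\partial_t\tilde\gamma(t)=2\rho(t)\tilde\gamma(t)$ is purely conformal, with $\rho(t)=\lambda'(t)/\lambda(t)-\varphi$, condition iii) must be restored by a $t$-dependent diffeomorphism. As $|\Sigma|_{\tilde\gamma(t)}$ is constant, $\int_\Sigma\rho(t)\,d\mu_{\tilde\gamma(t)}=0$, so I would solve $\Delta_{\tilde\gamma(t)}f_t=-2\rho(t)$ with $\int_\Sigma f_t\,d\mu_o=0$, set $X_t=\nabla_{\tilde\gamma(t)}f_t$, integrate the flow $\partial_t\phi_t=X_t\circ\phi_t$ with $\phi_0=\mathrm{id}$, and put $\gamma(t)=\phi_t^*\tilde\gamma(t)$. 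Then $\{\gamma(t)\}$ satisfies i), ii), iii); and since $\tfrac12\gamma'(t)=\phi_t^*\big(\rho(t)\tilde\gamma(t)+\Hess_{\tilde\gamma(t)}f_t\big)$ and $\tr_{\tilde\gamma(t)}\Hess_{\tilde\gamma(t)}f_t=\Delta_{\tilde\gamma(t)}f_t=-2\rho(t)$, one computes $\big|\tfrac12\gamma'(t)\big|^2_{\gamma(t)}=\big|\Hess_{\tilde\gamma(t)}f_t\big|^2_{\tilde\gamma(t)}-2\rho(t)^2$, so that $\alpha(t)\le\tfrac12\max_\Sigma\big|\Hess_{\tilde\gamma(t)}f_t\big|^2_{\tilde\gamma(t)}$.

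It remains to bound $\max_\Sigma|\Hess_{\tilde\gamma(t)}f_t|_{\tilde\gamma(t)}$ uniformly in $t$ by $\varphi$. Rewriting the Poisson equation on $(\Sigma,\sigma_o)$ gives $\Delta_{\sigma_o}f_t=-2\rho(t)\lambda(t)^2 e^{2(1-t)\varphi}$; using that $\lambda'/\lambda$ is a $\varphi$-weighted mean (hence $|\lambda'/\lambda|\le\|\varphi\|_0$, so $\|\rho(t)\|_{0,\alpha}\le C\|\varphi\|_{0,\alpha}$), that $\|e^{2(1-t)\varphi}\|_{0,\alpha}\le Ce^{2\|\varphi\|_0}(1+\|\varphi\|_{0,\alpha})$, and Schauder estimates on $(\Sigma,\sigma_o)$, one gets $\|f_t\|_{C^{2,\alpha}(\sigma_o)}\le C\lambda(t)^2 e^{2\|\varphi\|_0}\|\varphi\|_{0,\alpha}(1+\|\varphi\|_{0,\alpha})$. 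The conformal change of Hessian, $\Hess_{\tilde\gamma(t)}f_t=\Hess_{\sigma_o}f_t-(1-t)\big(df_t\otimes d\varphi+d\varphi\otimes df_t\big)+(1-t)\langle\nabla f_t,\nabla\varphi\rangle_{\sigma_o}\sigma_o$, gives $\|\Hess_{\tilde\gamma(t)}f_t\|_{C^0(\sigma_o)}\le C\|f_t\|_{C^2(\sigma_o)}(1+\|d\varphi\|_0)$, and converting to the $\tilde\gamma(t)$-norm multiplies this by $\max_\Sigma w_t^{-2}\le\lambda(t)^{-2}e^{2\|\varphi\|_0}$, where $w_t^2=\lambda(t)^2 e^{2(1-t)\varphi}$, so that the factors $\lambda(t)^{\pm2}$ cancel and $\max_\Sigma|\Hess_{\tilde\gamma(t)}f_t|_{\tilde\gamma(t)}\le Ce^{4\|\varphi\|_0}\|\varphi\|_{0,\alpha}(1+\|\varphi\|_{0,\alpha})(1+\|d\varphi\|_0)$. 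Combining this with $\beta(t)\ge e^{-4\|\varphi\|_0}t$ and $\int_0^1 t^{-1/2}\,dt=2$ yields $\zeta(\gamma)\le\int_0^1\sqrt{\alpha(t)/(4\beta(t))}\,dt\le Ce^{6\|\varphi\|_0}\|\varphi\|_{0,\alpha}(1+\|\varphi\|_{0,\alpha})(1+\|d\varphi\|_0)$, which is \eqref{eq-est-zeta-1}. The main obstacle is the diffeomorphism correction enforcing iii): the Hessian of $f_t$ is precisely what injects the Hölder norms of $\varphi$ into the bound, and the delicate points are to verify that the $\lambda(t)$-factors cancel between the conformal rescaling of the norm and the right-hand side of the Poisson equation — which keeps the exponent at $6$ rather than larger — and that every estimate is uniform down to $t=0$, where $\beta(t)\to0$ but the induced $t^{-1/2}$ singularity is integrable.
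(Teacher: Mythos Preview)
Your proposal is correct and follows essentially the same route as the paper: the path $\tilde\gamma(t)$ is exactly the paper's $\sigma(t)=c(t)^{-1}e^{2(1-t)\varphi}\sigma_o$ (with $\lambda(t)^2=c(t)^{-1}$), the diffeomorphism correction via $\Delta_{\tilde\gamma(t)}f_t=-\tfrac12\tr_{\tilde\gamma(t)}\tilde\gamma'(t)$ is the Mantoulidis--Schoen gauge the paper uses, and the Schauder estimate on $(\Sigma,\sigma_o)$ together with the conformal Hessian formula are applied in the same way. Your computation $|\tfrac12\gamma'(t)|^2_{\gamma(t)}=|\Hess_{\tilde\gamma(t)}f_t|^2_{\tilde\gamma(t)}-2\rho(t)^2$ is a slightly cleaner packaging of the paper's three-term expansion \eqref{eq-formula-at}--\eqref{eq-LX-Hessian}, and your lower bound $\beta(t)\ge e^{-4\|\varphi\|_0}t$ discards the $(1-t)K_-$ term the paper keeps, but neither affects the final inequality.
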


\begin{proof}
As in \cite{Nirenberg, MS}, a smooth path of metrics $ \{ \sigma (t) \}_{t \in [0, 1] }$ with constant area $4\pi$, connecting $ r_\gamma^{-2} \gamma$ to $\sigma_o$, 
can be given by 
\begin{equation} \label{eq-def-sigma-t}
\sigma (t) = c(t)^{-1}  e^{2(1-t)\varphi}\sigma_o .
\end{equation}
Here $c(t)$ is a normalization function satisfying 
\be \label{eq-ct-bound}
\begin{split}
 c(t) =   \frac{1}{4\pi} \int_{\Sigma} e^{2 (1-t)\varphi} \, d \mu_o 
\geq  \left( \min_\Sigma \,e^{2 \varphi} \right)^{1-t}  .
\end{split}
\ee
The Gauss curvature $K_{\sigma(t)}$ of $\sigma(t)$ satisfies
\begin{equation} 
\begin{split}
c(t)^{-1}  e^{2(1- t) \varphi} K_{ \sigma (t)} = & \ K_{\sigma_0} -  (1-t)  \Delta_{\sigma_o} \varphi \\
= & \ 1-  (1-t)  \Delta_{\sigma_o} \varphi, 
\end{split}
\end{equation}
where $ \Delta_{\sigma_o}$ is the Laplacian on $(\Sigma, \sigma_o)$. 
At $t=0$,  
\begin{equation}
e^{2\varphi}  r_\gamma^{2}  K_\gamma = 1 - \Delta_{\sigma_o}  \varphi.
\end{equation}
It follows that 
\begin{equation}\label{eq-K-conformal}
\begin{split}
K_{\sigma(t)} = & \  c(t) e^{- 2 ( 1 - t) \varphi } \left[ t + ( 1 - t ) e^{2 \varphi} \, r_\gamma^2 \, K_\gamma \right].
\end{split}
\end{equation}

In what follows, suppose $K_\gamma \ge 0$. By \eqref{eq-ct-bound} and \eqref{eq-K-conformal}, 
\begin{equation}\label{eq-K}
\begin{split}
K_{\sigma(t) } \geq \left( \frac{ \min_\Sigma e^{2\phi} }{ \max_\Sigma e^{2\phi} } \right)^{1-t} 
 \left[ t +   ( 1 - t )K_-  \min_\Sigma e^{2 \varphi }  \right].
\end{split}
\end{equation}
Here $ K_- = r_\gamma^2\min_{\Sigma} K_\gamma \ge 0 $.
In particular, \eqref{eq-K} shows 
$ K_{\sigma (t) } > 0 $, $ \forall \ t \in (0, 1] $.

Next, we apply Mantoulidis-Schoen construction \cite{MS} to revise 
$\{ \sigma (t) \}_{t \in [0, 1]}$ into a new path of metrics satisfying property iii) in Section \ref{sec-deformation}.
More precisely, consider a $1$-parameter family of diffeomorphisms $\lbrace \phi_t\rbrace_{t\in [0,1]}$ on $\Sigma$, 
generated by a smooth $t$-dependent vector field $X_t$ which is to be chosen later. 
Let ${\gamma}(t)=\phi_t^*(\sigma(t))$. Then
\begin{equation} \label{eq-dt-gamma-t}
\gamma'(t) = \phi^*_t( \sigma'(t))+\phi^*_t(\mathcal{L}_{X_t} \sigma (t)),
\end{equation}
where $\mathcal{L}$ denotes taking the Lie derivative. 
Hence,
\begin{equation}\label{psi}
\tr_{\gamma(t)}{\gamma}'(t)=\phi^*_t(\tr_{ \sigma (t)} \sigma '(t)+2\dv_{ \sigma (t)} X_t).
\end{equation}
Let  $\psi_t(x)=\psi(t,x)$ to be a smooth function on $[0,1]\times\Sigma$ which is a solution to
\begin{equation}\label{eq-gauge-for-psi}
\Delta_{ \sigma (t)}\psi_t=-\frac{1}{2} \tr_{ \sigma(t)} \sigma'(t),
\end{equation}
for each $t$.
Such a $\psi_t $ exists since  $ \{ \sigma (t) \}_{t \in [0,1]}$ has constant volume which guarantees 
$$\int_\Sigma\frac{1}{2} \tr_{ \sigma (t)} \sigma'(t) \, d \mu_{ \sigma (t)} =0 .  $$
Fix such a $ \psi_t$, let  $X_t=\nabla_{\sigma (t) } \psi_t$ where $\nabla_{\sigma (t) } $ is the gradient with respect to  $\sigma(t)$, 
then 
\be \label{eq-fixed-volume-form}
\tr_{{\gamma (t) }}{\gamma'(t) }=0
\ee 
by \eqref{psi} and \eqref{eq-gauge-for-psi}. 
Note that $ \gamma(0)  = \phi_0^*( \sigma (0) )$ is isometric to $ \sigma (0) = r_\gamma^{-2} \gamma$.
By abusing notation, we denote $  \left( \phi_0^{-1} \right)^* ( \gamma(t) ) $ still by $\gamma (t)$. Then 
$\{ \gamma (t) \}_{t \in [0,1]}$ connects $r_\gamma^{-2} \gamma$ to a round metric, 
has positive Gauss curvature for $ 0 < t \le 1$, and satisfies \eqref{eq-fixed-volume-form}.

Let $ \alpha (t) $ and $ \beta (t)$ be the function associated to $\{ \gamma (t) \}_{t \in [0,1]}$, given in 
\eqref{def-alpha(t)-beta(t)}. 
By \eqref{eq-K}, 
\begin{equation}\label{eq-lower-est-beta-t}
\begin{split}
\beta (t) \geq & \ e^{ - 4 (1-t ) \norm{ \varphi}_0 } 
 \left[ t +   ( 1 - t )K_-  e^{ - 2 \norm{ \varphi }_0 }  \right] . \\ 
\end{split}
\end{equation}
Next we estimate $\alpha (t)$. For the purpose of obtaining the elliptic estimate \eqref{eq-Schauder},
we normalize $ \psi_t $ so that   
\be \label{eq-normalization}
\int_\Sigma \psi_t \, d \mu_{\sigma_o} = 0 ,  \ \forall \, t \in[0,1].
\ee
This can be arranged as $ \psi_t $ is unique up to adding a constant for each $t$.

By the definition of $\alpha (t)$ and   \eqref{eq-dt-gamma-t},
\begin{equation} \label{eq-formula-at}
\begin{split}
\alpha(t) = & \ \frac18 \max_{\Sigma}| \sigma'(t)+\mathcal{L}_{X_t} \sigma (t) |_{\sigma(t)}^2 \\
= & \ \frac18 \max_{\Sigma} \left[ | \sigma'(t) |_{\sigma (t)}^2+2 \langle \sigma'(t),\mathcal{L}_{X_t} \sigma(t)  \rangle_{ \sigma (t) }+|\mathcal{L}_{X_t} \sigma (t) |_{\sigma (t) }^2 \right].
\end{split} 
\end{equation}
Let $ c_t = \ln c(t)$.  By \eqref{eq-def-sigma-t}, 
$  \sigma (t) =   e^{2(1-t)\varphi - c_t }\sigma_o $. 
Hence,  $\sigma'(t) = ( - 2 \varphi - c_t' ) \sigma (t) $ and
\begin{equation}\label{eq-term-1}
| \sigma'(t) |_{\sigma(t)}^2 = 2 ( 2\varphi +c'_t )^2.
\end{equation}
By \eqref{eq-gauge-for-psi} and the fact $ X_t = \nabla_{\sigma (t) } \psi_t $, 
\begin{equation}\label{eq-term-2}
\langle \sigma'(t) ,\mathcal{L}_{X_t} \sigma(t)  \rangle_{ \sigma(t) } =-2\left( 2\varphi + c'_t\right)^2.
\end{equation}
The term $ \mathcal{L}_{X_t} \sigma (t) $ satisfies 
\begin{equation} \label{eq-LX-Hessian}
\mathcal{L}_{X_t} \sigma (t) =2  \Hess_{\sigma (t) } \psi_t.
\end{equation}
Here $ \Hess_{\sigma (t) }$ denotes the Hessian on $(\Sigma, \sigma(t))$.
Since $ \sigma (t) $ is conformal to $ \sigma_o$, the following relation between $ \Hess_{\sigma_o} \psi_t$ and $ \Hess_{\sigma(t)} \psi_t$ 
can be checked directly: 
\be
 \Hess_{ \sigma(t) } \psi_t =  \Hess_{\sigma_o } \psi_t   - (1-t) \left[ d \psi_t   \otimes d  \varphi + d \varphi \otimes d \psi_t 
 - \langle d  \varphi, d \psi_t \rangle_{\sigma_o}   \sigma_o  \right] .
\ee
Therefore,
\begin{equation} \label{eq-Hessian-bd-1}
\begin{split}
 |\Hess_{ \sigma(t)} \psi_t |_{ \sigma_o } \leq & \  |\Hess_{\sigma_o} \psi_t |_{\sigma_o} 
+ (1-t) \left( 2 | d\psi_t  |_{\sigma_o} | d \varphi |_{\sigma_o} +  \sqrt{2} \, | \langle d\varphi,d\psi_t \rangle_{\sigma_o} | \right)  \\
\leq & \  C || \psi_t ||_{C^2(\Sigma) } \left( 1 + (1-t)   \norm{d \varphi}_0 \right) ,
\end{split}
\end{equation}
where $C$ is some constant depending only on $ \sigma_o$.
By \eqref{eq-gauge-for-psi}, \eqref{eq-def-sigma-t} and \eqref{eq-normalization}, $\psi_t$ on
$(\Sigma, \sigma_o)$ satisfies 
\begin{equation}
\Delta_{\sigma_o} \psi_t =  e^{2(1-t)\varphi - c_t } \left(  2 \varphi + c_t' \right)  \ 
\text{and} \ \int_{\Sigma} \psi_t \, d \mu_{\sigma_o} = 0 .
\end{equation}
By the standard elliptic theory, for any fixed $\alpha \in (0,1)$,
\be \label{eq-Schauder}
|| \psi_t ||_{C^{2, \alpha} (\Sigma )} \le C  || \Delta_{\sigma_o} \psi_t  ||_{C^{0, \alpha}  (\Sigma)} ,
\ee
where $C$ only depends on $\sigma_o$ and $\alpha$. 
The H\"{o}lder norm of $\Delta_{\sigma_o} \psi_t $ can be estimated as follows:
\begin{equation}\label{eq-seminorm-laplacian}
\begin{split}
\lsm{ e^{2(1-t)\varphi}} (2\varphi+c'_t)   \rsm_\alpha
&\leq \norm{e^{2(1-t)\varphi}}_{0} \, 2 \lsm\varphi\rsm_\alpha + \lsm{{e^{2(1-t)\varphi}}}\rsm_\alpha \, \norm{2\varphi+c'_t}_{0}.\\
\end{split}
\end{equation}
By the mean value theorem, given any $x, y \in \Sigma$, 
 \begin{equation}
|e^{2(1-t)\varphi(x)}-e^{2(1-t)\varphi(y)}|=2(1-t) |\varphi(x)-\varphi(y)| e^{2(1-t)\xi} ,
\end{equation}
for some $\xi $ lying between $\varphi(x) $ and $\varphi(y)$. Thus,
\begin{equation}
\lsm e^{2(1-t)\varphi}\rsm_\alpha\leq 2(1-t) \norm{ e^{2(1-t) \varphi}}_{0} \lsm \varphi\rsm_\alpha.
\end{equation}
Therefore,
\begin{equation}
\begin{split}
\lsm{e^{2(1-t)\varphi} (2\varphi+c'_t)  }\rsm_\alpha
&\leq 2\lsm\varphi\rsm_\alpha\norm{e^{2(1-t)\varphi}}_{0}\Big(   1+(1-t) \norm{2\varphi+c'_t}_{0} \Big) .
\end{split}
\end{equation}
Consequently,
\begin{equation} \label{eq-c0alpha-norm}
\begin{split}
&  \norm{e^{2(1-t)\varphi}  (2\varphi+c'_t)   }_{C^{0,\alpha} (\Sigma) }=\norm{ e^{2(1-t)\varphi} (2\varphi+c'_t)  }_{0} 
+ \lsm{ e^{2(1-t)\varphi}  (2\varphi+c'_t)  }\rsm_\alpha\\
&\leq  \ \norm{e^{2(1-t)\varphi}}_{0}
\left[ \norm{2\varphi+c'_t}_{0} +  2\lsm\varphi\rsm_\alpha \Big(   1+(1-t) \norm{2\varphi+c'_t}_{0}\Big) \right]  \\
&\leq  \  e^{2(1-t)\norm{\varphi}_{0}}
\left[ 4\norm{\varphi}_{0} + 2 \lsm \varphi\rsm_\alpha\Big(   1+ (1-t)  4  \norm{\varphi}_{0}\Big)  \right], 
\end{split}
\end{equation}
where we also used the fact $ c_t = \ln c(t)$ and 
\begin{equation}
|c_t'|
= \left|  \frac{\int -2\varphi \, e^{2(1-t)\varphi}d\mu_0}{\int  e^{2(1-t)\varphi}d\mu_0} \right|
\leq 2\norm{\varphi}_{0}.
\end{equation}
It follows from \eqref{eq-Hessian-bd-1} -- \eqref{eq-Schauder} and \eqref{eq-c0alpha-norm} that
\begin{equation} \label{eq-hessian-bd-final-1}
\begin{split}
&   |\Hess_{ \sigma(t)} \psi_t |_{ \sigma(t) }=   e^{-2(1-t)\varphi+c_t}|\Hess_{\sigma(t) } \psi_t |_{\sigma_o}\\
&\leq  \ e^{-2(1-t)\varphi+c_t} C_1 || \Delta_{\sigma_o} \psi_t ||_{C^{2, \alpha}  (\Sigma) } \left( 1 + (1-t)   \norm{d \varphi}_0 \right) \\
& \leq  \  C_2 e^{4 (1-t)\norm{\varphi}_{0}}
\Big(  \norm{\varphi}_{0} +  \lsm \varphi\rsm_\alpha + (1-t) \norm{\varphi}_{0}  \lsm \varphi\rsm_\alpha   \Big)
  \left( 1 + (1-t) \norm{d\varphi}_0 \right) .
\end{split}
\end{equation}
Here $ C_i$, $ i = 1, 2,  \ldots $, are constants only depending on $\sigma_o$ and $\alpha$.  
It follows from  \eqref{eq-formula-at} -- \eqref{eq-LX-Hessian} and \eqref{eq-hessian-bd-final-1} that
\be \label{eq-upper-est-alpha-t}
\alpha(t) \le  C_3 e^{8(1-t) \norm{  \varphi}_{0}} 
 \norm{\varphi}_{0, \alpha}^2   \left( 1 + (1-t)  \norm{\varphi}_{0, \alpha}    \right)^2
  \left( 1 + (1-t) \norm{d\varphi}_0 \right)^2, 
\ee
where $ || \varphi ||_{0, \alpha} =  \norm{\varphi}_{0} +  \lsm \varphi\rsm_\alpha  $. 
Equality in \eqref{eq-upper-est-alpha-t} holds if $ \varphi = 0  $  in which case $ \gamma$ is a round metric.

By \eqref{eq-lower-est-beta-t} and \eqref{eq-upper-est-alpha-t}, 
\begin{equation} \label{eq-ab-t-1}
\begin{split}
\sqrt{ \frac{\alpha (t) }{ \beta (t) } } 
\le & \ C_3 \frac{  e^{6  (1-t) \norm{\varphi}_{0}}   \norm{\varphi}_{0, \alpha}   \left( 1 + (1-t)  \norm{\varphi}_{0, \alpha}    \right)
  \left( 1 + (1-t) \norm{d\varphi}_0 \right) }
{ \sqrt{  t +   ( 1 - t )  e^{ - 2 \norm{ \varphi }_0 }  \, K_- }  } .
\end{split}
\end{equation}
Note that
$$
2 \ge \int_0^1 \frac{1}{ \sqrt{ t + (1 - t) e^{ - 2  \norm{ \varphi }_0 } K_- } }  \, d t = \frac{2  }{1 + \sqrt{e^{ - 2  \norm{ \varphi }_0 } K_-} } \ge 1 , 
$$
where we used $ 0 \le K_- \le 1 $.
Therefore, 
\be
\int_0^1 \sqrt{ \frac{\alpha (t) }{ \beta (t) } } \, d t \leq
C_4    e^{6 \norm{\varphi}_{0}}     \norm{\varphi}_{0, \alpha}   \left( 1 +  \norm{\varphi}_{0, \alpha}    \right)
  \left( 1 + \norm{d\varphi}_0 \right)    .
\ee
This proves \eqref{eq-est-zeta-1} by the definition of $\zeta (\gamma)$.
\end{proof}

In the rest of this section, we assume $ K_\gamma > 0 $. Applying results on 
the problem of prescribing Gauss curvature on a sphere from the literature (for instance \cite{Chang_parkcity, CGY93}), one can 
estimate $\zeta(\gamma)$ by the ratio between $   \max_{\Sigma} K_\gamma $ and $ \min_{\Sigma} K_\gamma $.

\begin{prop} \label{prop-zeta-2}
Let $ \gamma $ be a metric with positive Gauss curvature on a $2$-sphere $ \Sigma$. 
Let $ \zeta (\gamma)$ be given in \eqref{eq-def-zeta}. Then
$$
\zeta (\gamma)  \leq C (\kappa)   ,
$$
where $ C (\kappa) $ is a constant depending only on 
$ \displaystyle \kappa =  \frac{ \max_{\Sigma} K_\gamma} { \min_{\Sigma} K_\gamma } \ge 1 $. 
Moreover, there exists a small  $\epsilon > 0 $, such that,  if 
$ \kappa < 1 + \epsilon $, then
$$ \zeta (\gamma) \le  C | \kappa - 1 |, $$
where $C$ is some absolute constant. 
\end{prop}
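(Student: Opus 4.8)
The plan is to combine the conformal estimate of Proposition \ref{prop-zeta-1} with an a priori bound on the conformal factor $\varphi$ coming from the theory of prescribed Gauss curvature on $S^2$. Recall from \eqref{eq-uniformization-1} that $r_\gamma^{-2}\gamma = e^{2\varphi}\sigma_o$, and the Gauss curvature equation at $t=0$ reads $-\Delta_{\sigma_o}\varphi + 1 = e^{2\varphi}r_\gamma^2 K_\gamma =: K$, where $K$ is a positive function with $\max_\Sigma K/\min_\Sigma K = \kappa$ (the curvature ratio is a conformal, scale-invariant quantity). So the task is: given that $\varphi$ solves $-\Delta_{\sigma_o}\varphi = K - 1$ with $K>0$ having bounded oscillation ratio $\kappa$, bound $\norm{\varphi}_0$, $\norm{d\varphi}_0$ and $\lsm\varphi\rsm_\alpha$ (equivalently $\norm{\varphi}_{0,\alpha}$ and $\norm{d\varphi}_0$) in terms of $\kappa$, with the bound going to $0$ linearly as $\kappa\to 1$. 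Feeding such a bound into \eqref{eq-est-zeta-1} then yields $\zeta(\gamma)\le C(\kappa)$, and near $\kappa=1$ the right side of \eqref{eq-est-zeta-1} is, to leading order, a constant times $\norm{\varphi}_{0,\alpha}(1+\norm{d\varphi}_0)$, hence $O(|\kappa-1|)$.

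First I would record the normalization already fixed in the excerpt: from \eqref{eq-min-conformal} there is a point $p$ with $\varphi(p)=0$, so by \eqref{eq-bound-C0-norm-with-C1} and \eqref{eq-bound-alpha-sn} it suffices to control $\norm{d\varphi}_0$ (indeed $\norm{\varphi}_{0,\alpha}\le C\norm{d\varphi}_0$). Next I would invoke known a priori estimates for the prescribed-curvature equation on $(S^2,\sigma_o)$: writing $K = 1 + (K-1)$ with $\int_\Sigma(K-1)e^{2\varphi}\,d\mu_o = 0$ (Gauss–Bonnet, using $\int e^{2\varphi}d\mu_o = 4\pi$ and $\int \Delta_{\sigma_o}\varphi = 0$), standard results (e.g.\ \cite{Chang_parkcity, CGY93}) give that $\varphi$, normalized to kill the first spherical harmonics or normalized by $\varphi(p)=0$, is bounded in $C^{2,\alpha}$ solely in terms of $\max K$, $\min K$, and (say) a $C^\alpha$ bound on $K$ — but here one must avoid circularity, since a $C^\alpha$ bound on $K=e^{2\varphi}r_\gamma^2K_\gamma$ implicitly involves $\varphi$. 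The clean way around this is to do it by hand at the $C^0$/$W^{2,p}$ level: multiply the equation by $e^{2\varphi}$ and integrate, or more simply use that $K_{\sigma_o}=1>0$ forces, via the maximum principle applied at the max and min of $\varphi$, the two-sided bound $\min_\Sigma K \le 1 \le \max_\Sigma K$ is automatic, and then estimate $\mathrm{osc}_\Sigma\,\varphi$ by a Moser–Trudinger / Green's-function argument: $\varphi(x)-\varphi(y) = \int_\Sigma (G(x,z)-G(y,z))(K(z)-1)\,d\mu_o(z)$, so $\norm{\varphi}_0 \le C\norm{K-1}_{L^\infty}$, and $\norm{K-1}_{L^\infty} = \norm{e^{2\varphi}r_\gamma^2 K_\gamma - 1}_{L^\infty}$. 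The point is that $e^{2\varphi}r_\gamma^2 K_\gamma$ has oscillation ratio exactly $\kappa$ and mean (against $e^{2\varphi}d\mu_o$, or after bootstrapping, essentially against $d\mu_o$) close to $1$, so $\norm{K-1}_{L^\infty}\le C(\kappa-1)$ once $\kappa$ is close enough to $1$; this is where the linear dependence \eqref{eq-zeta-kappa-1-intro} is born. One then bootstraps: $\norm{d\varphi}_0 \le C\norm{\nabla_x G}_{L^1}\norm{K-1}_{L^\infty}\le C\norm{K-1}_{L^\infty}$ as well (the Green's function gradient is $L^1$ on $S^2$), closing the loop.

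The main obstacle I anticipate is precisely this apparent circularity — $K$ is not an independent prescribed function but is tied to the unknown $\varphi$ — together with making the estimate \emph{uniform} for all $\kappa$ in a bounded range (for the first assertion $\zeta(\gamma)\le C(\kappa)$, with no smallness) rather than only near $1$. For the global statement one cannot use a perturbative fixed-point/contraction argument; instead I would set up a continuity/degree argument along the family $K_\tau = (1-\tau) + \tau K$ (all with the same mean-zero-against-the-conformal-area constraint, renormalized), show the solution set is a priori bounded in $C^{2,\alpha}$ by a constant depending only on $\kappa$ — here one really does need the compactness results from \cite{Chang_parkcity, CGY93}, invoked with the observation that the oscillation ratio of the curvature stays $\le\kappa$ along the path — and conclude $\norm{\varphi}_{0,\alpha} + \norm{d\varphi}_0 \le C(\kappa)$. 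Plugging into \eqref{eq-est-zeta-1} gives $\zeta(\gamma)\le C(\kappa)$; and for $\kappa<1+\epsilon$ the self-improving $C^0$ bound $\norm{\varphi}_0\le C(\kappa-1)$ derived above, inserted into \eqref{eq-est-zeta-1} (whose right side is $\le C e^{6\norm\varphi_0}\norm\varphi_{0,\alpha}(1+\norm\varphi_{0,\alpha})(1+\norm{d\varphi}_0) \le C'\norm{d\varphi}_0$ for $\norm\varphi_0$ small), yields $\zeta(\gamma)\le C|\kappa-1|$, completing the proof.
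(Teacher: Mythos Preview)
Your high-level strategy --- feed a conformal-factor bound into Proposition \ref{prop-zeta-1}, and obtain that bound from the prescribed-Gauss-curvature literature --- matches the paper's. But a misidentification of the prescribed data derails the PDE estimates.

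You set $K := e^{2\varphi}r_\gamma^2 K_\gamma$ and assert $\max_\Sigma K/\min_\Sigma K = \kappa$. This is false: $\kappa$ is the ratio for $K_{\tilde\gamma} = r_\gamma^2 K_\gamma$, not for $e^{2\varphi}K_{\tilde\gamma}$. In the Nirenberg problem the \emph{prescribed} curvature is $K_{\tilde\gamma}$, and the equation $\Delta_{\sigma_o}\varphi + K_{\tilde\gamma}e^{2\varphi} = 1$ is semilinear with \emph{given} coefficient $K_{\tilde\gamma}$; there is no circularity in applying the results of \cite{CGY93, Chang_parkcity}, because those bounds take $K_{\tilde\gamma}$ (not your $K$) as input. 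By Gauss--Bonnet $\kappa^{-1}\le K_{\tilde\gamma}\le\kappa$, and \cite[Lemma~3.1]{CGY93} then directly gives $\|\varphi\|_0 \le C(\kappa)$ --- \emph{provided} one first uses the conformal group of $(\Sigma,\sigma_o)$ to impose the balancing condition $\int_\Sigma x_i\, e^{2\varphi}\,d\mu_o = 0$. This M\"obius gauge-fixing is essential (already for $\kappa=1$ the unbalanced solutions have $\|\varphi\|_0$ unbounded along the M\"obius orbit), and the condition $\varphi(p)=0$ you invoke is merely a \emph{consequence} of the area constraint, not an additional gauge choice that can substitute for balancing. The paper does exactly this: replace $\gamma$ by $\Phi^*(\gamma)$ to balance, cite \cite{CGY93} for $\|\varphi\|_0\le C(\kappa)$, then bootstrap via $L^p$ elliptic estimates and Sobolev embedding to $\|d\varphi\|_0\le C(\kappa)$.

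For the near-round case your Green's-function bootstrap does not close: from $\|\varphi\|_0 \le C\|K-1\|_{L^\infty}$ with $K = e^{2\varphi}K_{\tilde\gamma}$ you get at best $\|\varphi\|_0 \le C\big(e^{2\|\varphi\|_0}\kappa - \kappa^{-1}\big)$, which is genuinely circular and cannot yield $\|\varphi\|_0 \le C|\kappa-1|$ without already knowing $\|\varphi\|_0$ is small. The paper instead invokes \cite{SWW09}, where (again under balancing) it is shown that $\|K_{\tilde\gamma}-1\|_0$ small implies $\|\varphi\|_{W^{2,2}} \le C\|K_{\tilde\gamma}-1\|_0$, hence $\|\varphi\|_{0,\alpha} \le C|\kappa-1|$; this is a stability estimate around the round metric, not a linear Green's-function bound.
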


\begin{proof}
Let $  \tilde \gamma =  r_\gamma^{-2} \gamma $. Then  
$$  \frac{ \max_{\Sigma} K_{ \tilde \gamma } } { \min_{\Sigma} K_{ \tilde \gamma } }  = \kappa . $$
By the Gauss-Bonnet theorem,  $ K_{\tilde \gamma} = 1 $ somewhere on $ \Sigma$. Thus,
\be \label{eq-bound-Gauss}
\kappa^{-1} \le K_{ \tilde \gamma} \le \kappa .
\ee
The function $\varphi$ in \eqref{eq-uniformization-1} satisfies
\be \label{eq-varphi-pde}
\Delta_{\sigma_o} \varphi + K_{\tilde \gamma} e^{2 \varphi } = 1 . 
\ee
Replacing $\gamma $ by $ \Phi^*( \gamma)$ if necessary, where $ \Phi$ is a conformal diffeomorphism on $(\Sigma, \sigma_o)$, one may
assume $\varphi$ satisfies a balancing condition
\be
\int_{\Sigma} x_i e^{2 \varphi} \, d \mu_o = 0 , \ i = 1, 2 , 3 ,
\ee
where $ x_i$ denotes the coordinate function on $ \Sigma$ 
if $ \Sigma $ is identified with the unit sphere $\{ |x| = 1 \}$ in $ \R^3$ (see \cite{Chang_parkcity} for instance).
By \eqref{eq-bound-Gauss} and \cite[Lemma 3.1]{CGY93} (also see (a)' of Chapter 7 in \cite{Chang_parkcity}), 
there exists a constant $ C(\kappa) $, depending on $\kappa$, so that
\be \label{eq-c0-bound-varphi}
|| \varphi ||_{0} \le C (\kappa). 
\ee
It follows from \eqref{eq-bound-Gauss}, \eqref{eq-varphi-pde}, \eqref{eq-c0-bound-varphi} and $L^p$ elliptic estimates that 
$  || \varphi ||_{W^{2,p} } $ is bounded by some constant depending only on $\kappa$ and any chosen $ p > 2$.
By Sobolev embedding theorems, this implies
\be \label{eq-c0-bound-gd-varphi}
|| d \varphi ||_{0} \le C (\kappa, p) ,
\ee
where the constant depends only on $\kappa$ and $p$.
The claim $ \zeta ( \gamma) \le C (\kappa)$ follows from \eqref{eq-est-zeta-1}, \eqref{eq-c0-bound-varphi} and \eqref{eq-c0-bound-gd-varphi}.

Next, suppose $\kappa $ is close to $1$. Then $ || K_{\tilde \gamma } - 1 ||_{0} $ is small 
by \eqref{eq-bound-Gauss}.
In this setting, it was shown on page 433-434 in  \cite{SWW09} that
there exists a constant $\delta > 0 $ such that 
\be \label{eq-SWW}
\begin{split}
|| K_{\tilde \gamma } - 1 ||_{0} \le \delta \Longrightarrow & \ 
 || \varphi ||_{W^{2,2}} \le C || K_{\tilde \gamma } - 1 ||_{0}, \\ 
 \  \ & \  \text{hence} \  || \varphi ||_{0, \alpha} \le C || K_{\tilde \gamma } - 1 ||_{0}  
\end{split}
\ee
for some $ \alpha \in (0,1)$ and $C$ is a constant depending only on $\alpha$. 
Let  $\alpha $ in Proposition \ref{prop-zeta-1}  be given by the $\alpha$ in \eqref{eq-SWW},
the rest of the claim in Proposition \ref{prop-zeta-2} now follows from 
\eqref{eq-est-zeta-1}, \eqref{eq-c0-bound-varphi}, \eqref{eq-c0-bound-gd-varphi} and
\eqref{eq-SWW}.
\end{proof}

Theorem \ref{thm-main} follows from Corollary \ref{cor-est-1} and Proposition \ref{prop-zeta-2}.

\section{Discussion on NNSC fill-ins} \label{sec-NNSC}

Let $ \Sigma$ be a closed $(n-1)$ dimensional manifold, $ n \ge 3$. 
Let $\gamma$ be a metric with positive scalar curvature on $\Sigma$. 
Let $\mathcal{F}(\Sigma,\gamma)$ denote the set of nonnegative scalar curvature (NNSC)
fill-ins of $(\Sigma, \gamma)$, i.e. $\mathcal{F} (\Sigma, \gamma)$ 
consists of $n$ dimensional, compact, connected Riemannian manifolds $(\Omega, g_{_\Omega} )$ with boundary 
such that its boundary $\partial\Omega$, with the induced metric, is isometric to $(\Sigma,\gamma)$, 
and the scalar curvature of $g$ is nonnegative. We are interested in an NNSC fill-in with mean convex boundary. 
Let
$$
\F=\lbrace (\Omega, g_{_\Omega} )\in \FF\,|\, H >0 \rbrace , 
$$
where $H$ is the mean curvature of $\d \Omega$ in $(\Omega, g_{_\Omega})$. 

Following \cite{MM16} (also see \cite{SWWZ, SWW20}), we let
$$\Lambda(\Sigma,\gamma) = 
\sup \left\lbrace  \frac{1}{(n-1) \omega_{n-1} } \int_{\partial\Omega} H  \, d \mu \ \hspace{.1cm}\big| \,  
(\Omega, g_{_\Omega}) \in \F \right\rbrace.$$
Clearly, for any constant $c > 0$, 
\be \label{eq-L-scaling}
\Lambda(\Sigma, c^2 \gamma) = c^{n-2} \Lambda(\Sigma, \gamma) .
\ee

\begin{thm} \label{thm-Lambda}
Let $\gamma$ be a metric with positive scalar curvature on $\Sigma$.
If $ \F  \ne \emptyset $, then
\be \label{eq-lower-L}
  \Lambda(\Sigma, \gamma)  \geq  r_\gamma^{n-1} \left( \frac{ \min_{\Sigma}  R_\gamma  } {(n-1)(n-2) } \right)^\frac12 .
\ee
Here $ r_\gamma$ is the volume radius of $(\Sigma, \gamma)$, i.e. $ | \Sigma |_\gamma = \omega_{n-1} r_\gamma^{n-1}$.
\end{thm}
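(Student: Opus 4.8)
The plan is to turn the hypothesized fill-in into a cap at the far end of a collar produced by the quasi-spherical construction of Section~\ref{sec-deformation} applied to the \emph{constant} path, and then to let the outer boundary of the collar run off to infinity while controlling its total mean curvature by the monotonicity \eqref{eq-mean-curv-der}. Since, under a dilation $\gamma\mapsto c^2\gamma$, both sides of \eqref{eq-lower-L} get multiplied by $c^{n-2}$ (the left side by \eqref{eq-L-scaling}, the right side by direct inspection), we may assume $|\Sigma|_\gamma=\omega_{n-1}$, i.e. $r_\gamma=1$; the goal is then $\Lambda(\Sigma,\gamma)\ge\left(\frac{\min_\Sigma R_\gamma}{(n-1)(n-2)}\right)^{1/2}$.

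Fix $(\Omega,g_{_\Omega})\in\F$, with boundary $(\Sigma,\gamma)$ and positive boundary mean curvature $H$. For each $s_0>1$, I would run the construction of Section~\ref{sec-deformation} on $M_0=[1,s_0]\times\Sigma$ with the constant path $\gamma(t)\equiv\gamma$ (so $\gamma_s\equiv\gamma$): the background metric is $\bar g=ds^2+s^2\gamma$, with $\bar H_s=(n-1)/s$ and $R_{\bar\gamma_s}=s^{-2}R_\gamma>0$. As in Lemma~\ref{lem-quasi-spherical}---whose proof, and the results of \cite{ST,EMW} on which it rests, require neither that $\Sigma$ be a sphere nor anything beyond positivity of $R_{\bar\gamma_s}$, so that solvability on the compact interval $[1,s_0]$ is immediate---there is a positive function $u$ on $M_0$ for which $g:=u^2ds^2+\bar\gamma_s$ is scalar-flat, the mean curvature of $\Sigma_1$ with respect to $\partial_s$ equals the positive constant $\tfrac12\min_\Sigma H$, and every slice $\Sigma_s$ has positive mean curvature $H_s$. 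I would then form $\widehat\Omega=(\Omega,g_{_\Omega})\cup_{\Sigma_1}(M_0,g)$, identifying $\partial\Omega$ with $\Sigma_1$: both sides induce the metric $\gamma$ on the gluing hypersurface, and the mean curvature of $\partial\Omega$ there (namely $H$) strictly dominates that of $\Sigma_1$ from the collar side (namely $\tfrac12\min_\Sigma H$), so the glued metric is NNSC across the corner and, by the standard corner-smoothing of NNSC metrics, may be replaced by a genuinely smooth NNSC metric that is unchanged near $\Sigma_{s_0}$. The resulting $\widehat\Omega$ is a compact, connected NNSC manifold with boundary $(\Sigma_{s_0},s_0^2\gamma)$ of positive mean curvature $H_{s_0}$, so $\widehat\Omega\in\mathcal F_+(\Sigma,s_0^2\gamma)$ and hence $\Lambda(\Sigma,s_0^2\gamma)\ge\frac{1}{(n-1)\omega_{n-1}}\int_{\Sigma_{s_0}}H_{s_0}\,d\mu_{s_0}=\mathcal H_{s_0}$.

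By \eqref{eq-L-scaling} this says $\Lambda(\Sigma,\gamma)\ge s_0^{-(n-2)}\mathcal H_{s_0}$. Applying Proposition~\ref{prop-mean curv} to the constant path (valid for a general closed $\Sigma$ and without $\gamma(1)$ round, by the Remark following that proposition) gives $\alpha_s\equiv0$ and $\beta_s\equiv\beta:=\frac{\min_\Sigma R_\gamma}{(n-1)(n-2)}$, so \eqref{eq-mean-curv-der} reduces to $(s^{-(n-2)}\mathcal H_s^2)'\ge(n-2)\beta\,s^{n-3}$. Integrating from $1$ to $s_0$ and discarding $\mathcal H_1^2\ge0$ yields $\mathcal H_{s_0}^2\ge\beta\,s_0^{n-2}(s_0^{n-2}-1)$, hence $\Lambda(\Sigma,\gamma)\ge\sqrt{\beta}\,\sqrt{1-s_0^{-(n-2)}}$ for every $s_0>1$; letting $s_0\to\infty$ gives $\Lambda(\Sigma,\gamma)\ge\sqrt{\beta}$, which is \eqref{eq-lower-L} under the normalization $r_\gamma=1$. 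I expect the only genuinely delicate point to be the collar step: confirming that the Shi-Tam/\cite{EMW} quasi-spherical solution exists on a compact collar over an \emph{arbitrary} closed $\Sigma$ with freely prescribed positive inner mean curvature, and that the corner it forms with $(\Omega,g_{_\Omega})$ smooths to an honest NNSC fill-in without altering the outer boundary data. Granting this, Proposition~\ref{prop-mean curv} for the constant path delivers the estimate at once.
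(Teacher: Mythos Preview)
Your proposal is correct and follows essentially the same route as the paper: normalize $r_\gamma=1$, apply Proposition~\ref{prop-mean curv} to the constant path $\gamma(t)\equiv\gamma$ on a collar glued to a given fill-in (with the corner handled via the mollification of \cite{M02}), and use the scaling law \eqref{eq-L-scaling}. The only cosmetic difference is that the paper sets the collar's inner mean curvature equal to $H$ itself, keeps the resulting $\mathcal H_1^2$ term, and takes the supremum over $\F$ to obtain $(s^{n-2}-1)\Lambda(\Sigma,\gamma)^2\ge(s^{n-2}-1)\beta_0$ for any single $s>1$, whereas you prescribe a strictly smaller inner mean curvature to force the corner sign, discard $\mathcal H_1^2$, and pass $s_0\to\infty$.
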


\begin{proof}
For simplicity, we may assume $r_\gamma = 1$. 
Take $(\Omega, g_{_\Omega}) \in \F$.
Choose  $\gamma (t) = \gamma$, $ 0 \le t \le 1$, and 
use $\{ \gamma (t) \}_{t \in [0,1]} $ and the function $H$, determined by $(\Omega, g_{_\Omega})$,  
in Proposition \ref{prop-mean curv}, we have  
\begin{equation}\label{eq-c-path}
\frac{d\mathcal{H}^2_s}{ds}\geq \frac{n-2}{s} \mathcal{H}^2_s 
+ (n-2) s^{2n-5} \beta_0, \ \ \forall \, s \ge 1 .
\end{equation}
where $ \beta_0 = \frac{1} {(n-1)(n-2) }  \min_{\Sigma}  R_\gamma$.
Fix any $s > 1$, \eqref{eq-c-path} implies 
\be \label{eq-c-mH}
s^{2-n} \mathcal{H}_s^2 \ge  \mathcal{H}_1^2 +  \beta_0 ( s^{n-2} - 1 ). 
\ee

To proceed, note that if  $(\Sigma \times [1, s] , g)$ is attached to $(\Omega, g_{_\Omega} )$ 
by identifying $\Sigma_1 $ with $ \p \Omega$, 
we would get an NNSC fill-in of $(\Sigma_s, s^2 \gamma)$, except the resulting fill-in may not 
be smooth across $\Sigma_1 = \p \Omega$. 
For the moment, suppose this fill-in were smooth. Then, by \eqref{eq-c-mH} and 
the definition of $\Lambda (\Sigma, s^2 \gamma)$, 
\be \label{eq-L-mH}
s^{2-n} \Lambda (\Sigma, s^2 \gamma)^2 \ge  \mathcal{H}_1^2 +  \beta_0 ( s^{n-2} - 1 ). 
\ee
Taking the supremum of the right side of \eqref{eq-L-mH} over $(\Omega, g_{_\Omega} ) \in \F$, 
we obtain
\be \label{eq-L-mH-1}
s^{2-n} \Lambda (\Sigma, s^2 \gamma)^2 \ge \Lambda (\Sigma,  \gamma)^2 +  \beta_0 ( s^{n-2} - 1 ). 
\ee
By \eqref{eq-L-scaling}, the above becomes
\be \label{eq-L-mH-2}
s^{n-2} \Lambda (\Sigma,  \gamma)^2 \ge \Lambda (\Sigma,  \gamma)^2 +  \beta_0 ( s^{n-2} - 1 ), 
\ee
which yields 
$$ \Lambda (\Sigma,  \gamma)^2 \ge \beta_0 , $$ 
giving the estimate in \eqref{eq-lower-L}.
To finish the proof, we note by applying the mollification construction in \cite{M02},
the above mentioned ``singular" fill-ins can be approximated by smooth fill-ins whose normalized  
total mean curvature approaches $\mathcal{H}_s$
(see \cite{MMT18,  SWW20, SWWZ} for instance). This completes the proof. 
\end{proof}

If  $n = 3$, \eqref{eq-lower-L} becomes 
$ 
  \Lambda(\Sigma, \gamma)  \geq  r_\gamma^2 \left(  \min_\Sigma K_\gamma \right)^\frac12 .
$
This can be alternatively derived by isometrically embedding $(\Sigma, \gamma)$ in $ \R^3$, 
making use of the classic Minkowski inequality and the Gauss-Bonnet theorem.

\end{document}